\numberwithin{equation}{section}
\theoremstyle{plain}
\newtheorem{theorem}[subsection]{Theorem}
\newtheorem{proposition}[subsection]{Proposition}
\newtheorem{lemma}[subsection]{Lemma}
\newtheorem{corollary}[subsection]{Corollary}
\newtheorem{rem}[subsection]{Remark}
\theoremstyle{definition}
\newcommand{\rad}{\operatorname{rad}}
\newcommand{\tarc}{\mbox{\large$\frown$}}
\newcommand{\arc}[1]{\stackrel{\tarc}{#1}}
\newcommand{\Z}{\mathbb{Z}}
\newcommand{\R}{\mathbb{R}}
\newcommand{\C}{\mathcal{C}}
\newcommand{\Ga}{\Gamma}
\def\a{\alpha}
\def\b{\beta}
\def\l{\lambda}
\def\r{\rho}
\def\d{\delta}
\begin{document}

\title[Integral points]{Integral points on  convex curves}

\author{Jean-Marc Deshouillers and Adri\'an Ubis}

\address{Institut de Math\'{e}matiques de Bordeaux \\ Universit\'e de Bordeaux \\ Bordeaux INP et CNRS \\ 33405 TALENCE \\ France}
\email{jean-marc.deshouillers@math.u-bordeaux.fr}
\address{Departamento de Matem\'aticas \\ Universidad Aut\'onoma de Madrid \\ Madrid 28049 \\ Spain}
\email{adrian.ubis@uam.es}
\thanks{}
% 
% \subjclass{}
% 
\begin{abstract}
We estimate the 
maximal number of integral points which can be on a convex arc in $\R^2$ with given length, minimal radius of curvature and initial slope.
\end{abstract}

\maketitle

\hfill \emph{To the memory of Javier Cilleruelo}

\

\section{Introduction}

Evaluating the number of integral points (points with integral coordinates) on finite continuous curves in $\R^2$ is a fairly general Diophantine question. Since the distance between two distinct elements in $\Z^2$ is at least $1$, on a 
simple
curve with length $\ell$ there cannot be more than $\ell + 1$ integral points, a bound which is only achieved for some linear curves.

Besides the study of specific curves, the first general result is due to Jarn\'{i}k \cite{J} who proved in 1925 that the number of points on a strictly convex arc $y=f(x)$ of length $\ell$ is at most $3(4\pi)^{-1/3} \ell^{2/3} + O(\ell^{1/3})$,  and that this bound is reached for some arc. From there Jarn\'ik deduced a similar result for strictly convex simple closed curves, 
giving the optimal bound
$3(2\pi)^{-1/3} \ell^{2/3} + O(\ell^{1/3})$.

In 1963, Andrews \cite{A} gave an upper bound for the number $N$ of integral points on the boundary of a strictly convex body in $\R^n$ in terms of the volume $V$ of that body, which is $N \ll V^{1/3}$ when $n=2$.

Grekos \cite{G}, in 1988, revisited Jarn\'{i}k's method in the case of strictly convex \textit{flat} $\C^2$ curves, \emph{i.e.} curves $\Ga$ for which the length $\ell = \ell(\Ga)$ is smaller than the minimum of 
the
radius of curvature along $\Ga$. Denoting by $r = r(\Ga)$ this minimal radius of curvature and by $N=N(\Ga)$ the number of integral points on $\Ga$, he first obtains the upper bound
\begin{equation}\label{GG}
N \le 2 \ell r^{-1/3}.
\end{equation}
With an unspecified constant, this result can be derived from \cite{A}.

The second result of \cite{G} implies that, up to the constant, (\ref{GG}) is best possible, as long as $\Ga$ is not too flat --- \emph{i.e.} $\log \ell / \log r > 2/3$ --- and the lower bound he obtains for families of curves is uniform in terms of the slope $w = w(\Ga)$ of the curve (\emph{i.e.} the tangent of its angle with the $x$-axis).

The relevance of the slope is pointed out in \cite{DG}: Grekos and the first named author of the present paper showed that for any strictly convex $\C^2$ curve with a tangent at the origin parallel to the $x$-axis, the number of its integral points satisfies
\begin{equation}\label{DG1}
N \le \ell^2/r + \ell/r + 1,
\end{equation}
a quantity which is 
essentially
less than $\ell r^{-1/3}$ when $\log \ell / \log r < 2/3$.

On the other hand, for any $\alpha \in [1/3, 2/3]$ they also constructed curves $\Ga$ for which $N > 0.79 \ell r^{-1/3}$ and $\log \ell / \log r = \alpha$.\\

Our main result expresses how the maximal number of integral points on a \emph{very flat} strictly convex $\C^2$ curve depends on the rational approximation of its slope.  In particular, we show that (\ref{GG}) is essentially best possible for any fixed initial slope in the case $\ell\ge r^{2/3}$, which 
slightly
improves on the result of Grekos ($\ell\ge C_{\epsilon} r^{2/3+\epsilon}$).

Let us first precise our notation. A \emph{strictly convex $\C^2$ curve} $\Ga$ is (the image of) a $\C^2$ map $\gamma = (x, y)$ from $[0, 1]$ to $\R^2$ such that  
$x'y''-x''y'$ never vanishes. Up to an isometry which preserves $\Z^2$ (composition of symmetries with respect to the axes or the main bisectors)  we may assume that $0 \le  y'(0) \le x'(0)$; we then let $w = w(\Ga) = y'(0)/x'(0)$ which belongs to $[0, 1]$. The radius of curvature of $\Ga$ at the point $(x(t), y(t))$ is given by $r(t) = (x'^2+y'^2)^{3/2} / |x'y''-x''y'|$ and we let $r = r(\Ga) = \min_{t \in [0, 1]} r(t)$.  We recall that $\ell = \ell(\Ga)$ denotes the length of $\Ga$ and $N = N(\Ga)$ the number of its integral points. 
We consider curves satisfying $\ell(\Gamma)\le r(\Gamma)$ and notice 
that they are really graphs (or arcs) $y=f(x)$.

A real number $x$ can be decomposed in a unique way as $x=\lfloor x\rfloor + \{x\}$, where $\lfloor x\rfloor$ is a rational integer called the integral part of $x$ and $\{x\}$ is a real number in $[0, 1)$. If $\{x\} \neq 1/2$, there exists a unique integer $[x]$ such that $\|x\|=|x-[x]| < 1/2$;  if $\{x\} =1/2$, we define $[x]$ to be $\lfloor x\rfloor$; in both cases, we call $[x]$ the nearest integer to $x$.

Finally, for functions $f$ and $g\ge 0$, we will also use either $f=O(g)$ or $f\ll g$ as shortcut for $|f|\le Cg$ for some positive constant $C$; $f\asymp g$ meaning both $f\ll g$ and $g\ll f$. 

With those convention and notation, we have

\begin{theorem}[Main result]\label{main_result}
There exist two positive numbers $c_1$ and $c_2$ having the following property: for any $r\ge \ell \ge  1$  and $w \in [0, 1]$, the maximum $N_{w,\ell, r}$ of $N(\Ga)$ where $\Ga$ are curves with $\ell(\Ga)=\ell$, $r(\Ga)=r$ and $w(\Ga)=w$ satisfies
\begin{eqnarray}\label{mainequp}
N_{w,\ell,r} \le c_2\left(1 + \min(\ell r^{-1/3} , \ell \delta_{w,\ell r^{-1}}) \right)
\end{eqnarray}
and
\begin{eqnarray}\label{maineqlow}
c_1\left(1 + \min (\ell r^{-1/3} , \ell \delta_{w,\ell r^{-1}})  \right)\le N_{w,\ell,r},
\end{eqnarray}
with $\delta_{w,x}=\min_{q\in \mathbb N} (q x+\|qw\|)$ for $x>0$.
\end{theorem}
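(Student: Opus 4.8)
The plan is to prove the two inequalities separately, and in each case to treat the two terms of the minimum. The common first move is to replace $\Ga$ by the inscribed convex lattice polygon on its integral points $P_0,\dots,P_{N-1}$ (ordered by abscissa) and to record the edge vectors $v_i=P_{i+1}-P_i=(g_i,h_i)$ with $g_i\ge 1$. Strict convexity makes the slopes $h_i/g_i$ strictly increasing; since the total turning of a flat arc is at most $\ell/r$ and $w\le 1$, these slopes lie in an interval of length $\theta:=C\ell/r$ with endpoint $w$, so $|h_i/g_i-w|\le\theta$ for an absolute constant $C$. Finally the abscissa of the arc dominates $\sum_i g_i$, hence $\sum_i g_i\le\ell$. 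Writing each slope in lowest terms as $p_i/q_i$ with $q_i\le g_i$, I obtain $N-1$ \emph{distinct} reduced fractions $p_i/q_i$ in $[w,w+\theta]$ with $\sum_i q_i\le\ell$. Note $\delta:=\delta_{w,\theta}\asymp\delta_{w,\ell r^{-1}}$, so I may work with $\delta$.

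For the upper bound \eqref{mainequp} the term $\ell r^{-1/3}$ is exactly Grekos' inequality \eqref{GG}, so it remains to prove $N\ll 1+\ell\delta$. Let $q^*$ attain $\delta=q^*\theta+\|q^*w\|$, let $p^*$ be the nearest integer to $q^*w$, and consider for each edge the integer
\[
n_i:=q^*p_i-q_ip^*=q^*(p_i-q_iw)+q_i(q^*w-p^*).
\]
Since $|p_i-q_iw|=q_i|p_i/q_i-w|\le q_i\theta$ and $|q^*w-p^*|=\|q^*w\|$, I get $|n_i|\le q^*q_i\theta+q_i\|q^*w\|=q_i\delta$. Hence whenever $n_i\ne 0$ one has $q_i\ge 1/\delta$, while $n_i=0$ forces $p_i/q_i=p^*/q^*$, which by distinctness of the slopes happens for at most one index. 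Therefore at least $N-2$ of the $q_i$ are $\ge 1/\delta$, and $\ell\ge\sum_i q_i\ge (N-2)/\delta$ gives $N\le 2+\ell\delta$; combining with \eqref{GG} yields \eqref{mainequp}. This step is short, and it is the part I am most confident about.

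For the lower bound \eqref{maineqlow} I would reverse the analysis. Choose the $K=\lceil c\min(\ell r^{-1/3},\ell\delta)\rceil$ reduced fractions in $[w,w+\theta]$ of \emph{smallest} denominator, verify that their denominators sum to at most $\ell$, build the convex lattice polygon whose edges are the corresponding primitive vectors taken in order of increasing slope, and then smooth it into a strictly convex $\C^2$ arc with length exactly $\ell$, minimal radius of curvature exactly $r$, and initial slope exactly $w$. The optimisation already performed for the upper bound shows this greedy family has the right size in both regimes: when $\delta\ge r^{-1/3}$ the denominators run up to $\asymp(\ell/\theta)^{1/3}$ and one recovers the classical Jarn\'ik--Grekos count $\asymp\ell r^{-1/3}$, whereas when $\delta<r^{-1/3}$ the budget $\ell$ is exhausted by fractions of denominator $\asymp 1/\delta$ and one gets $\asymp\ell\delta$ points.

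The main obstacle is this construction, and two points need genuine work. First, a number-theoretic \emph{lower} bound guaranteeing that enough reduced fractions of small denominator actually lie in $[w,w+\theta]$: this is where the arithmetic of $w$ enters, and I expect to produce the required fractions from the continued-fraction convergents of $w$ together with their Farey mediants, controlling simultaneously their number and the sum of their denominators in terms of $\delta_{w,\theta}$. Second, the smoothing: the polygon must be realised as a genuine $\C^2$ arc hitting $(\ell,r,w)$ exactly with curvature nowhere below $1/r$. This is routine in spirit — interpolate the vertices by arcs of curvature $\asymp 1/r$ and absorb the residual discrepancies in length and slope by appending a short circular arc and rescaling — but it requires care so that the perturbations destroy neither convexity nor the count of integral points.
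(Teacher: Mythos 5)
Your upper bound is correct and takes a genuinely different route from the paper. The paper proves $N\ll 1+\ell\delta_{w,\ell r^{-1}}$ geometrically: it traps $\Ga$ in a parallelogram with two sides of slope $a/q$ (Proposition \ref{inside_rational_parallelogram}), counts the lines $y=ax/q+n/q$ meeting that parallelogram (Lemma \ref{lines_in_parallelogram}), and uses convexity to put at most two integral points on each line. Your argument via the edge vectors $(g_i,h_i)$ of the inscribed lattice polygon and the integers $n_i=q^*p_i-q_ip^*$ is an arithmetic proof in the spirit of the paper's Proposition \ref{upper_bound_arithmetic}, and it is sound: $|n_i|\le q_i\delta$ forces $q_i\ge 1/\delta$ except for at most one index, and $\sum q_i\le\sum g_i\le\ell$ closes the estimate. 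The only caveat is the claim that all chord slopes lie in $[w,w+C\ell/r]$ with $C$ absolute: this needs $\ell/r$ bounded away from the point where the tangent angle crosses $\pi/2$, so you still need the paper's reduction (cutting into pieces of length $\asymp r^{2/3}$) or at least a remark handling $\ell$ close to $r$.

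The lower bound, however, is where almost all of the paper's work lies, and your proposal defers precisely those parts. Three concrete gaps. First, the Farey-counting input: in the regime $\delta\ge r^{-1/3}$ you need $\gg\ell r^{-1/3}$ reduced fractions of denominator $\le r^{1/3}$ in an interval of length $\asymp\ell/r$ around $w$, i.e.\ a \emph{positive proportion} of all such Farey fractions, uniformly in $w$; convergents and mediants of $w$ supply far too few. The paper's Lemma \ref{farey_in_intervals} does this with a M\"obius-inversion/sieve argument (including a prime-number-theorem estimate for the tail), and its key feature is that the count depends on the arithmetic position of the interval (through $a/q$), not just its length. Second, your budget ``$\sum q_i\le\ell$'' is not the right constraint: the curvature bound $\ge r$ forces each slope $p_i/q_i$ to be realised over a horizontal extent $\gtrsim r\cdot(\text{gap to the adjacent slopes})$, so each primitive vector must be repeated $\lambda_i$ times (the paper's $\lambda_j=[M^2/k_j^2]$ in Lemma \ref{curve_farey}, and the explicit $\Delta x_j\asymp\Omega$ in Theorems \ref{lower_bound_very_near} and \ref{lower_bound_near}); it is this interplay, not the sum of denominators, that produces the two regimes $\ell r^{-1/3}$ versus $\ell\delta$, and in the ``rational'' regime the paper needs a separate explicit construction on the lines $y=ax/q+n/q$ rather than a greedy choice of fractions. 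Third, the smoothing is not routine: one must interpolate consecutive lattice points by a $\mathcal C^2$ arc with \emph{prescribed tangent directions at both endpoints} and curvature within a fixed factor of $1/r$ throughout (Proposition \ref{Glemma} and its appendix proof), and then adjust the initial slope to be exactly $w$ by prolonging or truncating, in the latter case bounding the number of integral points lost. As it stands, \eqref{maineqlow} is a plausible programme but not a proof.
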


\begin{rem}\label{ellsmall}
In the excluded case $\ell<1$ we trivially have $N_{w,\ell,r}=1$. When $\ell<r^{1/3}$, Theorem \ref{main_result} says that $N_{w,\ell,r}\asymp 1$, so the result does not depend on the slope $w$. The same happens when $\ell>r^{2/3}$, since then the result simply claims that $N_{w,\ell,r} \asymp \ell/r^{1/3}$ (due to the inequality $\delta_{w,x}\ge x$); this is 
a slight
extension of Grekos work \cite{G}, since he proved it for $\ell\gg _{\epsilon} r^{2/3+\epsilon}$. We 
shall
actually show that the result for $\ell =r^{2/3}/12$ implies the case $\ell\ge r^{2/3}/12$. In that sense,  we 
shall
be able to  assume that 
\begin{equation}\label{assum}
1\le \ell \le r^{2/3}/12.
\end{equation}
\end{rem}

\begin{rem}
If $w=0$, we have $q=1$ and then (\ref{mainequp}) reads $N_{0,\ell,r} \le c_2 (1 + \ell^2 r^{-1})$ which is, up to a constant factor,
 the first result of \cite{DG}.
\end{rem}

\begin{rem}
The construction used in \cite{DG} for the above mentioned lower bound consists in considering, for $\alpha \in [1/3, 2/3]$,  curves (selected parts of parabolas) with given $r$,  $\ell = r^{\alpha}$ and $w = r^{-1/3}$.
\end{rem}

\begin{rem}
It may seem curious to restrict the consideration of the slope to one end point of the curve and one may ask what about the other end, or another point of the curve. Indeed, the consideration of the slope $w$ is relevant only when $\ell$ is less than $r^{2/3}$, in which case the curve is extremely flat and the slope of the tangent can be considered as constant over the whole curve. We let the Reader make this point precise.
\end{rem}

Theorem \ref{main_result}  is completely uniform, with $\delta_{w,x}$ measuring how well $w$ can be approximated by rationals with small denominator.  From this result we can derive precise consequences for curves which begin with a fixed rational or irrational slope.

For any $w$ irrational number, we are going to measure its good approximation by rationals by the exponent
\[
 \beta=\beta(w)= \limsup_{j\to\infty} \beta_j
\]
where $(a_j/q_j)_{j\in \mathbb N}$ is the continued fraction of $w$ and $\beta_j$ is defined by the equation
\[
 \left|w-\frac{a_j}{q_j}\right|=q_j^{-\beta_j}.
\]
This is the same as the irrationality exponent defined in \cite[page 168]{B}. If $w$ is a rational number, we define $\beta(w)=\infty$. 
It is known that $2\le \beta\le \infty$ and that in that range the set of real numbers with exponent $\beta$ has Hausdorff measure $2/\beta$ (Jarn\'{i}k-Besicovitch). We are going to show that this exponent determines also the number of  
integral
points on curves with initial slope $w$.

\begin{theorem}[Curves with fixed initial slope]\label{initial_slope}
Let $1/3<\alpha<2/3$. Let $w$ be an irrational number with $\beta(w)=\beta$. Then, we have 
\[
 \limsup_{r\to \infty} \frac{\log N_{w,r^{\alpha},r}}{\log r} =\alpha-\frac 13 
\]
and
\[
 \liminf_{r\to \infty} \frac{\log N_{w,r^{\alpha},r}}{\log r} =\min\left(\alpha-\frac 13, 2\alpha-1+\frac{1-\alpha}{\beta}\right).
\]
\end{theorem}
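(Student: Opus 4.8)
The plan is to deduce both equalities from Theorem \ref{main_result} by passing to logarithms, so that the entire question collapses to the behaviour of the arithmetic function $\delta_{w,x}$ as $x\to0$. Writing $\ell=r^{\alpha}$ we have, from the Main Theorem, $N_{w,r^{\alpha},r}\asymp 1+\min(r^{\alpha-1/3},\,r^{\alpha}\delta_{w,r^{\alpha-1}})$. Setting $x=r^{\alpha-1}\to0$ and $h(r):=\alpha+\frac{\log\delta_{w,x}}{\log r}$, one gets $\frac{\log N_{w,r^{\alpha},r}}{\log r}=\min(\alpha-\tfrac13,\,h(r))+o(1)$ whenever the bracket tends to infinity (the constants $c_1,c_2$ and the additive $1$ being harmless in the log). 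Since $\alpha-\tfrac13$ is constant, $\limsup$ and $\liminf$ commute with the $\min$, so the theorem reduces to computing $\limsup_r h$ and $\liminf_r h$. Because $\log r=\frac{\log x}{\alpha-1}$ with $\alpha-1<0$, we have $h=\alpha-(1-\alpha)\phi$ where $\phi(x):=\frac{\log\delta_{w,x}}{\log x}$, so the orientation reverses and everything is governed by $\liminf_{x\to0}\phi$ and $\limsup_{x\to0}\phi$.

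First I would reduce $\delta_{w,x}=\min_{q\ge1}(qx+\|qw\|)$ to a minimum over convergent denominators. By the theory of best approximations of the second kind, the convergent denominators $q_j$ are exactly the record denominators of $q\mapsto\|qw\|$, so for any $q$ the largest $q_j\le q$ satisfies $\|q_jw\|\le\|qw\|$, whence $q_jx+\|q_jw\|\le qx+\|qw\|$ and $\delta_{w,x}=\min_j(q_jx+\|q_jw\|)$. Using the standard estimates $\|q_jw\|\asymp q_{j+1}^{-1}$ and $q_{j+1}\asymp q_j^{\beta_j-1}$ (the latter a restatement of $|w-a_j/q_j|=q_j^{-\beta_j}$), the affine function $x\mapsto q_jx+\|q_jw\|$ is the active one on the lower envelope precisely for $x\in[q_{j+1}^{-2},q_j^{-2}]$, up to multiplicative constants.

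The core is then a one-parameter optimisation on each such interval. Substituting $x=q_j^{-\theta}$ with $\theta\in[2,\,2(\beta_j-1)]$ gives $\delta_{w,x}\asymp q_j^{1-\theta}+q_j^{-(\beta_j-1)}$, so that, up to $o(1)$ as $q_j\to\infty$, $\phi(x)=\frac{\min(\theta-1,\beta_j-1)}{\theta}$. This is maximised at the balance point $\theta=\beta_j$ (where the two terms of $\delta_{w,x}$ are comparable) with value $1-\frac1{\beta_j}$, and minimised at the endpoints $\theta\in\{2,\,2(\beta_j-1)\}$ with value $\frac12$. Letting $j\to\infty$ and using $\beta=\limsup_j\beta_j$ yields $\limsup_{x\to0}\phi=1-\frac1\beta$ (realised along the balance points of those $j$ with $\beta_j\to\beta$) and $\liminf_{x\to0}\phi=\frac12$ (realised along transition points for every $j$). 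Feeding this through $h=\alpha-(1-\alpha)\phi$ gives $\limsup_r h=\frac{3\alpha-1}2$ and $\liminf_r h=2\alpha-1+\frac{1-\alpha}\beta$, so that $\limsup\frac{\log N}{\log r}=\min(\alpha-\tfrac13,\tfrac{3\alpha-1}2)=\alpha-\tfrac13$ (since $\tfrac{3\alpha-1}2>\alpha-\tfrac13$ for $\alpha>\tfrac13$) and $\liminf\frac{\log N}{\log r}=\min(\alpha-\tfrac13,\,2\alpha-1+\tfrac{1-\alpha}\beta)$.

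I expect the main obstacle to be the per-interval optimisation of $\phi$ together with the bookkeeping of the constants hidden in $\asymp$: one must check that replacing $\|q_jw\|$ and $q_{j+1}$ by clean powers of $q_j$ perturbs $\phi$ only by $o(1)$ uniformly, and that the extrema of $\phi$ on each interval are attained (asymptotically) at the balance and transition points rather than at some interior competitor. A secondary point is to match $\limsup$ and $\liminf$ correctly through the orientation-reversing relation $h=\alpha-(1-\alpha)\phi$, and to verify that the bracket in Theorem \ref{main_result} genuinely tends to infinity along the chosen sequences, so that the passage to $\frac{\log N}{\log r}$ is legitimate.
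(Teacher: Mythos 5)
Your argument is correct and lands on exactly the same final optimisation as the paper, but it gets there by a genuinely different reduction. The paper never analyses $\delta_{w,x}$ on its own: it selects the convergent by the condition $q_j\le r^{1/3}<q_{j+1}$ and proves, by a repulsion argument between rationals, that the \emph{capped} quantity $\min(r^{-1/3},\delta_{w,\ell r^{-1}})$ is within a factor $10$ of $\min(r^{-1/3},\|q_jw\|+q_j\ell r^{-1})$; it then writes $q_j\asymp r^{\epsilon}$ with $\epsilon\in[\tfrac{1}{3(\beta_j-1)},\tfrac13]$ and minimises $r^{\epsilon+2\alpha-1}+r^{\alpha-\epsilon(\beta_j-1)}$ over $\epsilon$, finding the interior critical point $\epsilon=(1-\alpha)/\beta_j$ exactly where you find $\theta=\beta_j$. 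You instead compute the lower envelope of $q\mapsto qx+\|qw\|$ directly: best approximations reduce the minimum to convergents, and $\delta_{w,x}\asymp q_jx+\|q_jw\|$ on $x\in[q_{j+1}^{-2},q_j^{-2}]$ (the one point to check, which you anticipate, is that the convergents with index $i>j$ are also dominated there; this holds because $q_ix\ge q_{j+1}x\ge q_{j+1}^{-1}\ge\|q_jw\|$ on that range). Your partition of the $r$-axis, $r\in[q_j^{2/(1-\alpha)},q_{j+1}^{2/(1-\alpha)}]$, thus differs from the paper's $r\in[q_j^{3},q_{j+1}^{3})$, but both give the same balance point $r\asymp q_j^{\beta_j/(1-\alpha)}$ for the liminf and the same capped value $r^{\alpha-1/3}$ at the transition points, so the answers agree. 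What your route buys is a clean statement about $\delta_{w,x}$ that is independent of the curve problem and makes the appearance of $\limsup_j\beta_j$ transparent; what the paper's route buys is that it never needs the exact envelope, only a comparison for the capped minimum. One caveat that you flag but do not resolve --- and which the paper's own derivation also glosses over by dropping the ``$+1$'' from Theorem \ref{main_result} --- concerns the regime $2\alpha-1+(1-\alpha)/\beta<0$ (possible when $\alpha<1/2$ and $\beta$ is large): since $N_{w,r^{\alpha},r}\ge 1$, along the balance points one only gets $\log N/\log r\to 0$ rather than a negative exponent, so in that range the liminf formula must be read with a $\max(0,\cdot)$.
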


In the previous result we excluded the ranges $\alpha\le 1/3$ and $\alpha\ge 2/3$ because on them we trivially have
\[
 \lim_{r\to\infty}   \frac{\log N_{w,r^{\alpha},r}}{\log r} = \max\left(0,\alpha-\frac 13\right),              
\]
and in particular the result does not depend on $w$.

\section{Upper bounds}

We use what is defined in the previous section.
We begin by recalling an upper bound obtained in \cite{G} which does not depend on the initial slope. This result 
directly follows from the understanding of
the case $\ell=r^{1/3}$. We are going to give an arithmetic proof based on looking at the slopes between  consecutive integral points. 

\begin{proposition}[Local upper bound] \label{upper_bound_arithmetic}
For  any $r,\ell \ge 1$  we have 
\[
 N_{w,\ell,r} \le 2 \frac{\ell}{r^{1/3}}+2.
\]
\end{proposition}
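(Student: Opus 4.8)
The plan is to control, for each triple of consecutive integral points, the area of the triangle they span from two sides: from below by integrality, and from above by the curvature constraint. The two bounds together force consecutive integral points to be far apart along $\Ga$, after which a simple packing argument produces the count.

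First I would list the integral points $P_0,P_1,\dots,P_{N-1}$ in the order in which they occur along $\Ga$ (since $\ell\le r$ the curve is an arc $y=f(x)$ and this is just the order of abscissae). If $N\le 2$ the asserted bound is trivial, because its right-hand side is at least $2$, so I assume $N\ge 3$. Writing $a_i=x_{i+1}-x_i$ for the horizontal gaps and $s_i=(y_{i+1}-y_i)/a_i$ for the chord slopes, strict convexity makes the $s_i$ strictly increasing, and the area of the triangle $P_{i-1}P_iP_{i+1}$ equals $\frac12 a_{i-1}a_i(s_i-s_{i-1})$. This is the arithmetic heart: the three vertices lie in $\Z^2$ and are not collinear, so this area is a positive half-integer, whence
\[
\frac12\, a_{i-1}a_i(s_i-s_{i-1})\ \ge\ \frac12 ,
\]
i.e. consecutive slopes differ by at least $1/(a_{i-1}a_i)$.

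The geometric step is to bound the same area from above by the arc length $\lambda_i$ of the sub-arc of $\Ga$ joining $P_{i-1}$ to $P_{i+1}$. The clean move is to place the origin at $P_i$ and rotate so that the tangent at $P_i$ is horizontal; parametrising the sub-arc by arc length with tangent angle $\phi$, one has $|\phi'|=1/r(t)\le 1/r$ and $\phi=0$ at $P_i$, so $|\phi(\sigma)|\le|\sigma|/r$. Then the ordinate obeys $|Y(\sigma)|\le\int_0^{|\sigma|}|\sin\phi|\,d\sigma'\le \sigma^2/(2r)$ while $|X(\sigma)|\le|\sigma|\le\lambda_i$, and convexity (together with $\ell\le r$, so the total turning stays below $\pi$) keeps $Y\ge 0$ on both sides of $P_i$. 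Expanding the determinant for the area then gives, with no smallness hypothesis on the global slope,
\[
\mathrm{Area}(P_{i-1}P_iP_{i+1})\ \le\ \frac{\lambda_i^{3}}{2r}.
\]
Combined with the lower bound $\frac12$, this yields $\lambda_i\ge r^{1/3}$ for every such triple. I expect this upper bound to be the main obstacle, precisely because the naive estimate through $f''$ degrades when the slope is large; rotating to the tangent at the central point is what removes that difficulty and, pleasantly, delivers the exact constant.

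Finally I would apply this to the non-overlapping triples $(P_{2j},P_{2j+1},P_{2j+2})$ with $0\le j<\lfloor (N-1)/2\rfloor$. The corresponding sub-arcs are pairwise internally disjoint and each has length at least $r^{1/3}$, so their total length $\lfloor (N-1)/2\rfloor\, r^{1/3}$ cannot exceed $\ell$. Hence $\lfloor (N-1)/2\rfloor\le \ell/r^{1/3}$, and since $\lfloor (N-1)/2\rfloor\ge (N-2)/2$ this rearranges to $N\le 2\ell r^{-1/3}+2$, as claimed. The only point to watch is the parity/floor passage, which is exactly what produces the additive term $2$.
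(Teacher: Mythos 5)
Your proof is correct, and while it rests on the same arithmetic kernel as the paper's (for three integral points the quantity $aB-Ab$ is a nonzero integer, which you phrase as ``the triangle has area at least $1/2$'' and the paper phrases as ``consecutive chord slopes differ by at least $1/(aA)$''), the surrounding architecture is genuinely different. The paper first reduces to the single statement that an arc of length $r^{1/3}$ carries at most two integral points (cutting a longer curve into pieces), and obtains the contradiction by bounding the \emph{global} slope range of that short arc by $\ell/r$ via comparison with a circle --- a step that leans on the normalization $0\le w\le 1$ and on $\ell\le r$. You instead bound the area of each triangle of consecutive integral points from above by $\lambda_i^{3}/(2r)$ directly, in coordinates rotated to the tangent at the middle point, and then run a packing argument over the internally disjoint sub-arcs spanned by the triples $(P_{2j},P_{2j+1},P_{2j+2})$. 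Your rotated-frame estimate ($|X|\le|\sigma|$, $|Y|\le\sigma^{2}/(2r)$ from $|\phi(\sigma)|\le|\sigma|/r$) is clean and, as you note, insensitive to the size of the slope, so your argument is marginally more robust; the packing step correctly reproduces the additive constant $2$ from the floor $\lfloor (N-1)/2\rfloor\ge (N-2)/2$. The paper's subdivision, on the other hand, isolates the scale-invariant statement ``length $r^{1/3}$ implies at most two integral points,'' which it reuses conceptually elsewhere. Both routes give the stated bound with the same constant.
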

\begin{proof}
The result is a direct consequence of the fact that a curve with length $\ell=r^{1/3}$ cannot have more than two integral points. Suppose this were not true. We can assume that $0\le w=\tan(\theta_0)\le 1$ and  $x'(t),y'(t)>0$ throughout the curve. The maximal slope that the curve can reach corresponds to the case of an arc of a circle of radius $r$, in which case that slope would be $w_1=\tan(\theta_0+\frac{\ell}{2\pi r})$. Since $\ell/r=1/r^{2/3}\le 1$ and $w\le 1$, we have $w_1\le w+\frac{\ell}{r}$.

Then, if $(x_1,y_1)$, $(x_2,y_2)$ and $(x_3,y_3)$ are three integral points on the curve with $x_1<x_2<x_3$,  we have:
$(a,b)=(x_2,y_2)-(x_1,y_1)$ and $(A,B)=(x_3,y_3)-(x_2,y_2)$ satisfy 
\[
 a+A<\ell,        \qquad    w\le \frac{b}{a}<\frac{B}{A}\le w+\frac{\ell}{r}%,
 .
\]
Since $a,A,b,B$ are natural numbers, this implies
\[
\frac{1}{\ell^2}< \frac{1}{Aa}\le\frac{B}{A}-\frac{b}{a}<\frac{\ell}{r}
\]
which gives $\ell>r^{1/3}$, a contradiction.
\end{proof}

We now begin the study
of an upper bound  for slopes $w$ that are near to a rational $a/q$ with small $q$. The tools will be geometric in nature.  In the case $w=0$, in \cite{DG} it was shown that one can bound the number of integral points 
on
the curve by the number of horizontal lines $y=n, n\in\mathbb Z$ that touch the curve.  We 
shall
do the same for the case $w=a/q$ with the ``rational'' lines $y=\frac{a}{q}x+\frac{n}{q}$. The following lemma is essentially in \cite{DG}.

\begin{lemma}
Let $3<\ell\le r$ and $\Ga$ as in the previous section. Then $\Ga$ is included in a curvilinear triangle $\mathcal T(A,C,D)$ with: $A=(x(0), y(0))$, $AC$ a straight line with length $\ell$ tangent to $\Ga$ at $A$; $\arc{AD}$ is an arc of a circle with radius $r$ and tangent to $\Ga$ at $A$; $CD$ is orthogonal to $AC$.
\end{lemma}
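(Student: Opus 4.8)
The plan is to work in coordinates adapted to $A$ and to describe $\mathcal{T}(A,C,D)$ as an explicit planar region, then to verify the inclusion by three separate one-sided comparisons. After an isometry I place $A$ at the origin with the tangent line $AC$ along the positive $x$-axis, so that $C=(\ell,0)$ and, since $\Gamma$ is strictly convex (constant sign of $x'y''-x''y'$), the curve bends to one fixed side, say upward. The circle of radius $r$ tangent to $\Gamma$ at $A$ then has centre $(0,r)$, its lower arc is $y=r-\sqrt{r^2-x^2}$, and $D=(\ell,\, r-\sqrt{r^2-\ell^2})$ is well defined precisely because $\ell\le r$. Cutting $\mathcal{T}(A,C,D)$ by vertical lines shows it is exactly the region $\{(x,y):0\le x\le \ell,\ 0\le y\le r-\sqrt{r^2-x^2}\}$, so it suffices to check that every point of $\Gamma$ satisfies these three inequalities.

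Next I parametrise $\Gamma$ by arc length $s\in[0,\ell]$ and introduce the tangent angle $\theta(s)$ with $\theta(0)=0$, so that $x'(s)=\cos\theta$, $y'(s)=\sin\theta$ and $\theta'(s)=\kappa(s)=1/r(s)\in(0,1/r]$. The total turning is at most $\ell/r\le 1$, hence $0\le\theta(s)\le s/r\le 1<\pi/2$ throughout. This single bound yields two of the three inclusions at once: $\cos\theta>0$ makes $x(s)=\int_0^s\cos\theta$ strictly increasing with $x(s)\le s\le\ell$ (so $\Gamma$ stays in the strip $0\le x\le\ell$ and is a genuine graph), while $\sin\theta\ge 0$ gives $y(s)\ge 0$ (so $\Gamma$ stays above $AC$); I also record $y(s)\le s\le r$ for later use.

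The heart of the argument is the remaining inclusion: $\Gamma$ must lie below the arc $\arc{AD}$, which — since $y\le r$ — is equivalent to $x(s)^2+(y(s)-r)^2\ge r^2$, i.e. $\Gamma$ lies outside the radius-$r$ disk, reflecting the fact that a curve of curvature at most $1/r$ bends less than the comparison circle. I would prove this by setting $g(s)=x(s)^2+(y(s)-r)^2$, noting $g(0)=r^2$, and showing $g'(s)\ge 0$. A direct computation gives $\frac12 g'(s)=\big(x\cos\theta+y\sin\theta\big)-r\sin\theta$, and expanding the integrals yields $x\cos\theta(s)+y\sin\theta(s)=\int_0^s\cos\big(\theta(s)-\theta(u)\big)\,du$. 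Substituting $v=\theta(u)$, $du=dv/\kappa\ge r\,dv$ (legitimate because $\kappa\le 1/r$ and the integrand is nonnegative on $[0,\theta(s)]\subset[0,\pi/2)$) bounds this below by $r\int_0^{\theta(s)}\cos(\theta(s)-v)\,dv=r\sin\theta(s)$, whence $g'(s)\ge 0$ and $g(s)\ge r^2$.

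Combining the three inclusions then gives $\Gamma\subseteq\mathcal{T}(A,C,D)$. The main obstacle is this last step: one must set up the arc-length/tangent-angle comparison carefully and, in particular, verify that the turning angle never reaches $\pi/2$, since this is exactly what keeps $x$ monotone (so that the region really is a graph region), keeps $\cos(\theta(s)-\theta(u))$ nonnegative in the change of variables, and makes the inequality $du\ge r\,dv$ push $g'$ in the right direction. The hypothesis $\ell\le r$ serves precisely to guarantee this angle bound.
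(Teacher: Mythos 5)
Your argument is correct and complete. Note, however, that the paper does not actually prove this lemma: it states it with the remark that it is ``essentially in \cite{DG}'' and moves on, so there is no in-paper proof to compare against, and what you have supplied is a genuine, self-contained justification. Your route is the natural one: after normalising the tangent at $A$ to the positive $x$-axis you reduce the containment to three one-sided inequalities, of which only the comparison with the circle of radius $r$ is nontrivial, and you handle that by a Schur--Blaschke-type curvature comparison --- writing $x$ and $y$ as integrals of $\cos\theta$ and $\sin\theta$ in arc length and using $\theta'=\kappa\le 1/r$ together with the total-turning bound $\theta(s)\le \ell/r\le 1<\pi/2$ to show that the squared distance to the centre $(0,r)$ is nondecreasing. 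The delicate points are all attended to: the turning bound keeps $\cos(\theta(s)-\theta(u))\ge 0$, so the substitution $v=\theta(u)$ with $du\ge r\,dv$ produces an inequality in the right direction, and the auxiliary bounds $x(s)\le s\le\ell$ and $0\le y(s)\le r$ convert ``outside the disk'' into ``below the arc $\arc{AD}$'', which is what the curvilinear triangle requires. The only cosmetic caveat is that fixing the bending direction as ``upward'' is a reflection and should be declared as part of the isometry normalisation (as the paper does when normalising the slope); this costs nothing. Compared with simply citing \cite{DG}, your proof has the advantage of being uniform in the initial slope and of making explicit exactly where the hypothesis $\ell\le r$ enters, namely in keeping the turning angle below $\pi/2$.
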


By using that lemma we are going to prove the following result
\begin{lemma}\label{parallelogram_lemma}
Let $3<\ell\le r/3$. 
Then $\Ga$ is included in a parallelogram with two sides parallel to the $y$ axis, two sides having slope $w$ and 
such that
the size of the sides parallel to the $y$ axis is at most $1.6\ell^2/r$
and its projection over the $x$ axis has length $1.02\ell$.
\end{lemma}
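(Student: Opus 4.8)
The plan is to take the curvilinear triangle $\mathcal{T}(A,C,D)$ supplied by the preceding lemma (which already contains $\Ga$) and to circumscribe it by the parallelogram whose lower side runs along the tangent segment $AC$, whose upper side is the vertical translate of that tangent by a vector $(0,h)$, and whose two vertical sides pass through $A=(x(0),y(0))$ and through $C$. Writing $\theta_0\in[0,\pi/4]$ for the angle of the initial tangent, so that $w=\tan\theta_0$ and $\cos\theta_0\ge 1/\sqrt2$, I would first record that $C_x=x(0)+\ell\cos\theta_0$ is the largest abscissa occurring in $\mathcal{T}$: indeed $x$ increases along $AC$, it increases along the arc $\arc{AD}$ (because $\phi+\theta_0<\pi/2$ throughout, using $\ell\le r/3$ and $w\le 1$), and it decreases along $CD$. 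Hence the horizontal projection of the parallelogram has length exactly $\ell\cos\theta_0\le \ell\le 1.02\,\ell$, which settles one of the two numerical claims.

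For containment, two facts are needed. On the lower side, strict convexity forces $\Ga$ to lie on one side of its tangent at $A$, so $\Ga$, and indeed all of $\mathcal{T}$, sits above the line $AC$. On the upper side I must check that the vertical deviation of $\mathcal{T}$ from that tangent never exceeds $h$. I would measure this deviation through the perpendicular distance: a point at perpendicular distance $d$ from a line of slope $w$ lies at vertical distance $d\sqrt{1+w^2}$ from it, a factor bounded by $\sqrt2$ here. Parametrising the circle of radius $r$ by the subtended angle $\phi$, a point of the arc at angle $\phi$ is at perpendicular distance $r(1-\cos\phi)$ from the tangent, an increasing function of $\phi$; the same monotone behaviour holds along $CD$, so the extreme vertical deviation is attained at the corner $D$, where $\sin\phi_D=\ell/r$.

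It then remains to estimate $h=\sqrt{1+w^2}\,r(1-\cos\phi_D)$. Using $\sin\phi_D=\ell/r\le 1/3$ together with the identity $1-\cos\phi_D=(\ell/r)^2/(1+\sqrt{1-(\ell/r)^2})$, one gets $r(1-\cos\phi_D)\le 0.52\,\ell^2/r$, whence $h\le \sqrt2\cdot 0.52\,\ell^2/r<1.6\,\ell^2/r$, which is the second numerical claim. Choosing this value of $h$ makes the parallelogram contain $\mathcal{T}(A,C,D)$, and therefore $\Ga$.

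I expect the only real subtlety to be bookkeeping rather than depth: correctly identifying $D$ as the extremal point (which needs the monotonicity of $x$ and of the perpendicular deviation along the arc, both guaranteed by $\ell\le r/3$ and $w\le 1$), and not confusing perpendicular distance with vertical distance — the factor $\sqrt{1+w^2}$ is exactly what the generous constants $1.6$ and $1.02$ are there to absorb. The hardest conceptual point, namely that $\Ga$ lies inside $\mathcal{T}(A,C,D)$ in the first place, has already been disposed of in the preceding lemma, so here I would be doing nothing more than converting that curvilinear triangle into a genuine parallelogram.
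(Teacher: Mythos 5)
Your proposal is correct and follows essentially the same route as the paper: both circumscribe the curvilinear triangle $\mathcal T(A,C,D)$ by a parallelogram with one pair of sides of slope $w$ along the tangent at $A$ and one vertical pair, and both reduce the vertical side to the estimate $|CD|=r-\sqrt{r^2-\ell^2}\ll \ell^2/r$ inflated by the factor $\sec\theta_0\le\sqrt2$ (the paper places the far vertical side at $x=1.02\ell$ using $|AD|\le\sqrt{1.04}\,\ell$, while you place it through $C$ via a monotonicity argument --- a cosmetic difference, and your horizontal extent $\ell\cos\theta_0\le 1.02\ell$ is if anything slightly sharper). No gap.
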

\begin{proof}
Throughout the proof of this lemma, we consider coordinates in the frame $(A,\vec i,\vec j)$, with $A=(x(0), y(0))$ and $\vec i$ (\emph{resp.} $\vec j$) is a unitary vector parallel to the $x$-axis (\emph{resp.} $y$-axis).
Notice that in the previous lemma we have $|CD|=r-\sqrt{r^2-\ell^2}$. 

We begin by showing that $|CD|\le 0.6 \ell^2/r$.
To prove it is equivalent to show $\sqrt{r^2-\ell^2}\le r-0.6 \ell^2/r$. Both $r^2-\ell^2$ and $r-0.6\ell^2/r$ are positive so it is equivalent to $r^2-\ell^2\ge r^2-1.2\ell^2+0.36\ell^4/r^2,$ \emph{i.e.} $0.2\ell^2\ge 0.36 \ell^4/r^2$, or $(\ell/r)^2\le 0.2/0.36$; but we have $(\ell/r)^2 \le 1/9< 0.2/0.36=5/9$.

Since $\ell\le r/3$, we further deduce that $|CD|\le 0.6 \ell^2/r\le 0.2 \ell$ and 
\[
 |AD|=\sqrt{|AC|^2+|CD|^2}\le \ell\sqrt{1.04}\le 1.02 \ell,
\]
so for any point $k=(x_k,y_k)$ in the triangle $ACD$, we have $x_k\le 1.02 \ell$.

On the line $x=1.02 \ell$, we consider the point $P$ which is also on the line $AC$ and $Q$ which is also on the line $AD$. We have the following properties:
\begin{itemize}
 \item[(i)] Any point in $\Ga$ is in the triangle $APQ$.
 \item[(ii)] $|PQ|\le 1.6 \ell^2/r.$
\end{itemize}
The first property is clear. Let us prove the second: if $\varphi$ is the angle between $AC$ and $AD$, since $|CD|\le 0.2 \ell$, we have
\[
 -\arctan(0.2)\le\theta+\varphi\le \frac{\pi}4+ \arctan(0.2)\le 0.983
\]
which implies $\cos(\theta+\varphi)\ge 0.55$
and since $\theta+\varphi$ is the angle between $\vec i$ and $AD$, the horizontal component of the point $D$ satisfies

\[
 x_D\ge 0.55 |AD|\ge 0.55 \ell.
\]

Let $C_1$  be the intersection of the line $AC$ with the line $x=x_D$. Since $\theta\in[0,\frac{\pi}4]$, we have
\[
 |C_1D|=\frac{|CD|}{\cos\theta}\le \sqrt{2} |CD| \le 0.6\sqrt 2 \frac{\ell^2}{r}
\]
and so
\[
 |PQ|=\frac{1.02}{x_D}|C_1 D|\le \frac{1.02}{0.55}  0.6\sqrt 2 \frac{\ell^2}{r} \le 1.6 \frac{\ell^2}{r}.
\]

Now the triangle $APQ$ is contained in the unique parallelogram $APQK$ satisfying the properties in the statement of the lemma.
\end{proof}

We can put the parallelogram from Lemma \ref{parallelogram_lemma} inside one with two sides having rational slope $a/q$ and the other two sides being vertical. This gives the following result.

\begin{proposition}[Curve inside rational parallelogram]\label{inside_rational_parallelogram}
Let $q\ge 1$ and $0\le a\le q$ with $(a,q)=1$. If $3<\ell<r/3$ then $\Ga$ is included in a parallelogram with two sides parallel to $\vec j$, two sides having slope $a/q$ and the size of the sides parallel to $\vec j$ is at most $1.02 \ell|w-a/q|+1.6\ell^2/r$. 
\end{proposition}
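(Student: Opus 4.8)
The plan is to take the parallelogram produced by Lemma \ref{parallelogram_lemma} and re-enclose it in one whose slanted sides have slope $a/q$ instead of $w$, keeping the two vertical sides in place. Working in the frame $(A,\vec i,\vec j)$ of the previous proof, the parallelogram from Lemma \ref{parallelogram_lemma} can be written as
\[
 \Pp_w=\set{(x,y): 0\le x\le 1.02\,\ell,\ wx+c_1\le y\le wx+c_2},
\]
where $c_2-c_1\le 1.6\,\ell^2/r$ is the common length of its sides parallel to $\vec j$, and where I have placed these two vertical sides on the lines $x=0$ (through $A$) and $x=1.02\,\ell$ (the horizontal projection has length $1.02\,\ell$). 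Since $\Ga\subset\Pp_w$, it suffices to build a slope-$a/q$ parallelogram with vertical sides on the same two lines that contains $\Pp_w$, and to estimate the length of its sides parallel to $\vec j$.

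The key step is to track the quantity $h(x,y)=y-\frac{a}{q}x$ over $\Pp_w$, since a slope-$a/q$ parallelogram with vertical sides on $x=0$ and $x=1.02\,\ell$ is precisely a set of the form $\set{(x,y):0\le x\le 1.02\,\ell,\ d_1\le h(x,y)\le d_2}$, whose sides parallel to $\vec j$ have length $d_2-d_1$. For a point of $\Pp_w$ we have $y=wx+t$ with $t\in[c_1,c_2]$, so
\[
 h(x,y)=\Big(w-\frac{a}{q}\Big)x+t .
\]
As $x$ ranges over $[0,1.02\,\ell]$ and $t$ over $[c_1,c_2]$, this affine expression sweeps an interval of length exactly
\[
 1.02\,\ell\,\Big|w-\frac{a}{q}\Big|+(c_2-c_1)\le 1.02\,\ell\,\Big|w-\frac{a}{q}\Big|+1.6\,\frac{\ell^2}{r}.
\]
Taking $d_1,d_2$ to be the endpoints of this interval yields a slope-$a/q$ parallelogram containing $\Pp_w\supset\Ga$ whose sides parallel to $\vec j$ have length at most the displayed bound, which is the assertion.

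There is no genuine obstacle here; the only thing to get right is the bookkeeping of the vertical extent under the change of slope. The mechanism is that tilting the slanted sides from slope $w$ to slope $a/q$ shears the parallelogram, and over a horizontal span of length $1.02\,\ell$ this adds at most $1.02\,\ell\,|w-a/q|$ to the vertical extent already present. Note that the arithmetic hypotheses $q\ge 1$, $0\le a\le q$ and $(a,q)=1$ play no role in this geometric estimate; they become relevant only when the proposition is later applied with a well-chosen rational approximation $a/q$ of the slope $w$.
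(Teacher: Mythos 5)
Your proof is correct and takes essentially the same route as the paper, which disposes of this proposition in a single sentence (enclose the slope-$w$ parallelogram of Lemma \ref{parallelogram_lemma} in a slope-$a/q$ one with the same vertical sides); your explicit bookkeeping via $h(x,y)=y-\frac{a}{q}x$ simply makes the shearing estimate precise, and your closing remark that the arithmetic hypotheses on $a,q$ are not used here is accurate.
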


Now, it is possible to control the integral points inside such a parallelogram by grouping them onto lines of slope $a/q$, the number of those lines being easy to understand.

\begin{lemma}[Integral points in a rational parallelogram]\label{lines_in_parallelogram} 
Let $q$ and $a$ be coprime integers with $q\ge 1$ and $a\ge 0$,
let $u,v,h,k$ be real numbers with $h>0$ and $k>0$ and let $\mathcal P$ be the parallelogram with vertices $(u,v), (u,v+h), (u+k,v+ak/q), (u+k,v+h+ak/q)$. The number of straight lines with slope $a/q$ which contain at least one integral point from $\mathcal P$ is at most equal to $qh+1$.
\end{lemma}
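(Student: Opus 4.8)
The plan is to count the lines of slope $a/q$ passing through integral points of $\mathcal P$ by understanding which such lines can possibly meet $\mathcal P$ at all, and then which of those carry a lattice point. A line of slope $a/q$ has the form $y = \frac{a}{q}x + t$ for some intercept-like parameter $t$; equivalently, writing $qy - ax = s$, each such line is the locus $qy - ax = s$ for a fixed real $s$. The key observation is that on any integral point $(m,n) \in \Z^2$, the quantity $qn - am$ is an \emph{integer}. Hence the distinct lines of slope $a/q$ through integral points of $\mathcal P$ correspond precisely to the distinct integer values taken by $s = qy - ax$ over the lattice points in $\mathcal P$.

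**The main computation.** First I would compute the range of the linear functional $L(x,y) = qy - ax$ over the whole parallelogram $\mathcal P$. Since $L$ is linear (affine), it attains its extrema at the vertices. Evaluating at the four vertices: at $(u,v)$ we get $qv - au$; at $(u,v+h)$ we get $qv - au + qh$; at $(u+k, v+ak/q)$ we get $q(v+ak/q) - a(u+k) = qv - au$; and at $(u+k, v+h+ak/q)$ we get $qv-au+qh$. Thus the two sides of slope $a/q$ lie on the level sets $L = qv-au$ and $L = qv - au + qh$, and $L$ ranges over the interval $[qv-au,\ qv-au+qh]$, an interval of length exactly $qh$. This is the crucial point: the sides of slope $a/q$ are themselves level curves of $L$, so the spread of $L$ across $\mathcal P$ is governed entirely by the vertical height $h$ and the factor $q$.

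**Counting integer values.** Any integral point of $\mathcal P$ gives an integer value $s = L(m,n)$ lying in the closed interval $[qv-au, qv-au+qh]$ of length $qh$. The number of integers in a closed real interval of length $qh$ is at most $\lfloor qh \rfloor + 1 \le qh + 1$. Since each distinct admissible line corresponds to one such integer value $s$, and conversely at most (in fact, to bound the count we only need ``at most'') the number of lines is bounded by the number of attainable integer values, we conclude that the number of lines of slope $a/q$ meeting $\mathcal P$ in an integral point is at most $qh + 1$, as claimed.

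**Where the care is needed.** There is essentially no hard obstacle here; the only subtlety is the bookkeeping that $L(x,y)=qy-ax$ takes integer values exactly on the lattice and that its range over $\mathcal P$ has length precisely $qh$ rather than something larger coming from the horizontal extent $k$ — this is why the statement does not involve $k$ at all. One should simply be slightly careful that the interval is closed (so that the bound is $\lfloor qh\rfloor + 1$ and not $\lfloor qh\rfloor$), which yields the clean estimate $qh+1$. The coprimality of $a$ and $q$ is not even needed for this upper bound; it is presumably recorded in the statement because it is the relevant case in the application, where these lines are the ``rational lines'' $y = \frac{a}{q}x + \frac{n}{q}$ of Proposition \ref{inside_rational_parallelogram}.
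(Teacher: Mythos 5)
Your proof is correct and is essentially the paper's own argument in different notation: the paper writes each line as $y=ax/q+m$, observes that $j=mq$ is an integer when the line meets a lattice point, and counts the integers $j$ in the interval $[vq-au,(v+h)q-au]$ of length $qh$ — which is exactly your linear functional $L(x,y)=qy-ax$ restricted to $\mathcal P$. No meaningful difference in approach.
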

\begin{proof}
Let $y=ax/q+m$ be the equation of such a straight line. Since it contains at least one integral point, $mq$ is an integer $j$. Since it contains one point in $\mathcal P$, we have $v\le au/q+j/q\le v+h$. Thus the number of straight lines we are counting is at most the number of integers $j$ in the interval $[vq-au,(v+h)q-au]$, whence the result.
\end{proof}

With the two previous results we can finally prove our upper bound for the number of integral points on $\Ga$.

\begin{theorem}[Upper bound for ``rational'' slopes]\label{upper_bound_geometric}
Let $3<\ell\le r/3$ and $\Ga$ with $\ell(\Ga)=\ell$ be such that for any $M\in\Ga$, $r(M)\ge r$. Then for any $q\ge 1$, $a\ge 0$ and $(a,q)=1$, we have
\[
N(\Ga)\le 2.04 q\, \ell\,\left|w-\frac aq\right|+ 3.2 q \frac{\ell^2}{r} +2.
\]
\end{theorem}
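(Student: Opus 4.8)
The plan is to combine the three preceding results directly. First I would apply Proposition \ref{inside_rational_parallelogram}, which places $\Ga$ inside a parallelogram $\Pp$ with two sides parallel to $\vec j$, two sides of slope $a/q$, and vertical side-length at most $h := 1.02\,\ell\,|w-a/q| + 1.6\,\ell^2/r$. Every integral point on $\Ga$ is then an integral point of $\Pp$, so it suffices to bound the number of integral points lying inside this parallelogram.

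The key observation is that integral points of $\Pp$ can be grouped onto the lines of slope $a/q$ passing through them. The number of \emph{distinct} such lines is controlled by Lemma \ref{lines_in_parallelogram}, giving at most $qh+1$ lines. The remaining point is that each such line can contain at most two integral points of $\Ga$: indeed, $\Ga$ is strictly convex, so any straight line meets it in at most two points, hence contributes at most two integral points. Multiplying the number of lines by $2$ yields the bound $N(\Ga)\le 2(qh+1)$.

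Substituting the value of $h$ from Proposition \ref{inside_rational_parallelogram}, I would compute
\[
N(\Ga)\le 2\bigl(qh+1\bigr) = 2q\left(1.02\,\ell\,\left|w-\tfrac aq\right| + 1.6\,\tfrac{\ell^2}{r}\right)+2
= 2.04\,q\,\ell\,\left|w-\tfrac aq\right| + 3.2\,q\,\tfrac{\ell^2}{r} + 2,
\]
which is exactly the asserted inequality. The arithmetic here is routine once the two ingredients are in place.

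I expect the only genuine subtlety to be the claim that each line of slope $a/q$ carries at most two integral points of $\Ga$. This relies on strict convexity of $\Ga$ (a line meets a strictly convex arc in at most two points), which is part of the standing hypotheses, so no extra work is needed; the factor $2$ in Lemma \ref{lines_in_parallelogram}'s consequence is what turns the line-count $qh+1$ into the stated bound. Everything else is bookkeeping: verifying that the hypotheses $3<\ell\le r/3$ of Proposition \ref{inside_rational_parallelogram} are met and that the vertices of $\Pp$ match the form required by Lemma \ref{lines_in_parallelogram}.
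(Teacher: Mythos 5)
Your proposal is correct and follows exactly the same route as the paper's own proof: enclose $\Ga$ in the rational parallelogram of Proposition \ref{inside_rational_parallelogram}, count the lines of slope $a/q$ through integral points via Lemma \ref{lines_in_parallelogram}, and use strict convexity to place at most two integral points of $\Ga$ on each line, giving $N(\Ga)\le 2(qh+1)$ with $h=1.02\,\ell\,|w-a/q|+1.6\,\ell^2/r$. The arithmetic matches the stated bound, so nothing further is needed.
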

\begin{proof}
By  Proposition \ref{inside_rational_parallelogram}, $\Ga$ is contained in a parallelogram $\mathcal P$, and by Lemma \ref{lines_in_parallelogram} the number of lines with slope $a/q$ inside that parallelogram which contain at least one integral point is at most
\[
 q\left(1.02\, \ell \left|w-\frac aq\right|+1.6 \frac{\ell^2}r \right)+1.
\]
Now, each integral point on $\Ga$ is contained on one of those lines. Moreover, since $\Ga$ is strictly convex, each line cannot contain more than two points, so the result follows.
\end{proof}

Notice that by choosing the best $a/q$ possible in Theorem \ref{upper_bound_geometric} we get the bound $N_{w,\ell,r} \ll 1+\ell \delta_{w,\ell r^{-1}}$ from Theorem \ref{main_result} , and considering also Proposition \ref{upper_bound_arithmetic} we have (\ref{mainequp}). In the following section we 
shall
show that those bounds are the only restrictions for $N_{w,\ell,r}$.

\section{Lower bound for ``irrational'' slopes}

The proof of Proposition \ref{upper_bound_arithmetic} shows a relationship between integral points on the curve and rational slopes.  For obtaining lower bounds, both Jarn\'ik \cite{J} and Grekos \cite{G} used Farey fractions as slopes in order to build curves with many integral points. We begin by writing a general result capturing those ideas.

\begin{lemma}[Curve with Farey tangents]
\label{curve_farey}
Let $I$ be an interval contained in $[0,1]$ with $|I|\le 1/30$. Let $M\in\mathbb N$ and $F_M$ be the family of Farey fractions with denominators up to $M$. If $|F_M\cap I|\ge 3$, then there exists a twice differentiable curve $\Gamma\subset \mathbb R^2$ such that
\begin{itemize}
\item[(i)] its length is at most $32 M^3 |I|$,
\item[(ii)] its radius of curvature at each point is in the interval $[\frac 1{16} M^3, 16M^3]$,
\item[(iii)] it has at least $|F_M\cap I|-1$ points with integer coordinates,
\item[(iv)] the slope at its initial point is 
$\frac{h_1+h_2}{k_1+k_2}$,
\end{itemize}
with $h_1/k_1$ and $h_2/k_2$ being the first two terms in $F_M\cap I$.
\end{lemma}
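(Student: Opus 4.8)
The plan is to realise the Farey fractions in $F_M\cap I$ as the slopes of a convex lattice polygon, and then to smooth that polygon into a twice differentiable convex curve, reading off the four required properties from the arithmetic of consecutive Farey fractions.

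First I would write $f_1<f_2<\dots<f_n$ for the elements of $F_M\cap I$, with $f_i=h_i/k_i$ in lowest terms and $n=|F_M\cap I|\ge 3$, and record the three standard Farey facts I intend to use repeatedly: consecutive fractions satisfy $h_{i+1}k_i-h_ik_{i+1}=1$, so that $f_{i+1}-f_i=1/(k_ik_{i+1})$; their denominators obey $M<k_i+k_{i+1}\le 2M$; and the mediant vector $(k_i+k_{i+1},\,h_i+h_{i+1})$ is primitive with slope strictly between $f_i$ and $f_{i+1}$. In particular $f_{i+1}-f_i\ge 1/M^2$, whence $n-1\le M^2|I|$ and, since $n\ge 3$, also $|I|\ge 2/M^2$; these two inequalities are exactly what turns the length estimate into the clean form (i).

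Next I would build the polygon $P_0,P_1,\dots$ whose successive edge vectors are the mediant vectors $(k_i+k_{i+1},h_i+h_{i+1})$ taken in order of increasing slope. Because these slopes are strictly increasing (each mediant lies between its two parents), the $P_j$ form a convex lattice polygon, so they are integer points with no lattice point between consecutive ones. Taking the initial vertex to correspond to the first mediant gives initial slope $(h_1+h_2)/(k_1+k_2)$, which is (iv), and keeping the vertices produces at least $n-1$ integer points, which is (iii). For the length I would bound the perimeter by $\sqrt2\sum_i(k_i+k_{i+1})\le 2\sqrt2\,M n$ (using that the slopes, hence the $h$-components, are at most the $k$-components) and feed in $n\le M^2|I|+1$ together with $M\le\tfrac12 M^3|I|$; the eventual smoothing lengthens the curve only by a factor $1+O(|I|^2)$ since the total turning is at most $|I|\le 1/30$, so (i) follows with room for the constant $32$.

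The main obstacle is property (ii): forcing the radius of curvature to stay within the single multiplicative window $[\tfrac1{16}M^3,16M^3]$ at \emph{every} point. The chords now have length $\asymp M$ uniformly, which controls the radius from below, but the turning at a vertex is governed by a determinant of the form $a+2$ with $a=\lfloor(M+k_i)/k_{i+1}\rfloor$, so it is large precisely where an intermediate fraction $f_{i+1}$ has small denominator (for instance near $1/2$), and there the naive polygon has radius of curvature only $\asymp M^2$ rather than $\asymp M^3$. Overcoming this is the heart of the argument: I would replace the corners by circular arcs of radius chosen inside $[\tfrac1{16}M^3,16M^3]$ and verify that they match into a globally twice differentiable convex curve still passing through the required integer points, exploiting $M<k_i+k_{i+1}\le 2M$ to pin the curvature scale and, if necessary, omitting the few lattice points sitting at small-denominator fractions (only $n-1$ of the $n$ are needed). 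Making this smoothing simultaneously preserve convexity, retain the integer points, and respect the two-sided curvature bound is the step I expect to demand the most care.
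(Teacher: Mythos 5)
Your general strategy (a convex lattice polygon built from the Farey fractions of $I$, then smoothed) is the right one, and you correctly isolate the real difficulty, namely the uniform two-sided curvature bound (ii). But the construction you propose does not get past that difficulty, and the fixes you sketch fail for concrete quantitative reasons. With mediant edge vectors $(k_i+k_{i+1},h_i+h_{i+1})$, all of length $\asymp M$, the exterior angle at a vertex is $\asymp (a+2)/M^2$ with $a=\lfloor (M+k_i)/k_{i+1}\rfloor$, exactly as you compute. Rounding such a corner by an arc of radius $R\asymp M^3$ tangent to the two adjacent edges forces you to cut back a distance $\approx R\tan(\theta/2)\asymp M(a+2)$ along each edge, which already exceeds the edge length $\le 2\sqrt2\,M$ once $a\ge 3$; the arc simply does not fit, and it swallows the neighbouring vertices, i.e.\ the very integer points you need. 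Discarding the offending vertices is not available to you either: the statement permits losing only \emph{one} of the $n$ points, whereas the Farey fractions in $I$ with denominator at most $M/2$ number $\asymp (M/2)^2|I|$, a positive proportion of $n$, so ``the few lattice points at small-denominator fractions'' is in fact a constant fraction of all of them. Finally, corner-rounding leaves the straight parts of the edges intact, and on a straight segment the radius of curvature is infinite, violating the upper bound $16M^3$ in (ii), which forbids any flat piece; and an arc tangent to both edges misses the vertex by $\approx R\theta^2/8$, so the smoothed curve no longer passes through the integer points at all.

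The paper's construction resolves this by a different choice of polygon. The edge arriving at the $j$-th vertex is not a mediant but $\lambda_j(k_j,h_j)$ with $\lambda_j=[M^2/k_j^2]$, i.e.\ the $j$-th Farey direction repeated about $M^2/k_j^2$ times, so that the chord length is $\asymp M^2/k_j$; the tangent direction \emph{prescribed at} the $j$-th vertex is the mediant $(h_j+h_{j+1})/(k_j+k_{j+1})$, so the turning between consecutive prescribed tangents near that vertex is $\asymp 1/(k_jM)$ (using $M<k_j+k_{j+1}\le 2M$). The ratio of chord length to turning is then $\asymp M^3$ \emph{uniformly in $j$}, precisely because the factor $k_j$ cancels; small-denominator fractions cause no anomaly. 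The smoothing is then done not by rounding corners but by a Hermite-type interpolation (Proposition \ref{Glemma}) producing, between consecutive vertices $A$ and $B$, a $\mathcal C^2$ convex arc through both points with the prescribed tangents and radius of curvature confined to $[M^3/16,16M^3]$ throughout; the hypotheses of that proposition are exactly the two ratio conditions that your mediant polygon fails to satisfy at small-denominator vertices. The length bound then follows from the mean value theorem applied to the slope, $|I|\ge \Delta(dy/dx)\ge \Delta x/(16M^3)$, rather than from counting edge lengths. So the gap in your argument is not a detail: the uniformization $\lambda_j\asymp M^2/k_j^2$ is the essential idea of the proof, and without it or an equivalent device the curvature condition cannot be met.
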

\begin{proof}
Let $I=[s,s+\Delta s]$; let us write the elements of $F_M$ in increasing order
\[
\frac{h_0}{k_0}< s\le \frac{h_1}{k_1}<\frac{h_2}{k_2}< \ldots <\frac{h_N}{k_N}\le s+\Delta s < \frac{h_{N+1}}{k_{N+1}}.
\]
We are going to use those elements to build our curve. We first list $N-1$ points with integer coordinates, which will be on the curve:
\[
 (x_1,y_1)=(0,0)   \; \text{and} \; \forall j \in [2, N-1]\colon (x_j,y_j)=(x_{j-1},y_{j-1})+(\lambda_j k_j, \lambda_j h_j) 
\]
with $\lambda_j=[M^2/k_j^2]$; we recall that $[x]$ denotes the nearest integer to $x$. 
Next, we fix  the slope of the curve at the point $(x_j,y_j)$ to be
\[
\tan \theta_j= \frac{h_j+h_{j+1}}{k_j+k_{j+1}}
\]
for $1\le j \le N$. We are going to use Proposition \ref{Glemma} to make sure that a curve satisfying those requirements and the ones in the statement of the lemma 
does exist.
In fact, in order to build the curve between $A=(x_{j-1},y_{j-1})$ and $B=(x_j,y_j)$, since the line between them has slope $\tan\theta=h_j/k_j$, we check that
\[
 \tan\theta_{j-1}-\tan\theta=\frac{h_{j-1}+h_j}{k_{j-1}+k_j}-\frac{h_j}{k_j}=-\frac{1}{k_j(k_{j-1}+k_j)}
\]
\[
 \tan\theta_j-\tan\theta=\frac{h_{j+1}+h_j}{k_{j+1}+k_j}-\frac{h_j}{k_j}=\frac{1}{k_j(k_{j+1}+k_j)}.
\]
Now, $k_{j+1}+k_j> M$ because otherwise $(h_{j+1}+h_j)/(k_{j+1}+k_j)$ would be a Farey fraction in $F_M$ between $h_j/k_j$ and $h_{j+1}/k_{j+1}$. Thus, $k_{j+1}+k_j\in [M,2M]$ for every $j$, so that
\[
 \frac{\tan\theta_{j-1}-\tan\theta}{ \tan\theta_j-\tan\theta}=-\frac{k_{j+1}+k_j}{k_{j-1}+k_j}\in [-2,-1/2],
\]
and then by applying Lemma \ref{trigonometry} we have
\[
 \frac{\alpha}{\beta}=\frac{\tan(\theta_{j-1}-\theta)}{\tan(\theta_j-\theta)}\in [-3,-1/3].
\]
Also
\[
 \frac{|AB|}{\tan\theta_j-\tan\theta}=\frac{\lambda_j \sqrt{h_j^2+k_j^2}}{1/(k_j(k_{j+1}+k_j))}=\lambda_j k_j^2 (k_{j+1}+k_j) \sqrt{1+(\tan\theta)^2}
\]
so that
\[
 \frac{|AB|}{\tan\theta_j-\tan\theta}\in \left[\frac 12 M^3, 3\sqrt{2}M^3\right],
\]
and then by Lemma \ref{trigonometry} we have
\[
 \frac{|AB|}{\beta}=\frac{|AB|}{\tan(\theta_j-\theta)}\in \left[\frac 13 M^3, 9 M^3\right].
\]
By Proposition \ref{Glemma} we can build a curve between $A$ and $B$ with radius of curvature between $M^3/16$ and $16 M^3$, and joining those pieces the same is true between $(x_1,y_1)$ and $(x_{N-1},y_{N-1})$.

Moreover, by our definition of the curve, in order to finish the proof we just have to show that its length satisfies the condition in the statement of the lemma. But by convexity and considering the slope of the curve we have
\[
  \text{Length}(\Gamma)\le \Delta x+\Delta y\le 2\Delta x.
\]
On the other hand, by the mean value theorem and our control over the curvature of the curve we have
\[
 \Delta s\ge \Delta\left(\frac{dy}{dx}\right)\ge \frac{1}{ 16 M^3} \Delta x
\]
so finally
\[
 \text{Length}(\Gamma)\le 32 M^3 \Delta s
\]
and the result follows.
\end{proof}

In order to take advantage of the previous result we need to control the distribution of Farey fractions in certain intervals.  The question is that for $N_{w,\ell,r}$ we are interested in the Farey fractions near $w$, and that depends on whether $w$ is near a rational with small denominator or not. In the first case, that rational repels other rationals, so we would not have other Farey fractions. In the second
one,
we should have the amount of Farey fractions that would be expected from probabilistic reasoning.

The problem with the analysis in \cite[Lemme 3]{G} and \cite[Corollary 1]{plagne} is that their counting of Farey fractions on an interval $I$ only takes into account its length $|I|$
and does 
not capture the 
subtlety
described in the previous paragraph. We solve that problem with the following result.

\begin{lemma}[Farey fractions in an interval]
\label{farey_in_intervals}
There exists a constant $C>1$ such that for any $a,q$ coprime natural 
numbers with $a/q+1/q^2<1$ and $M$ with $\frac{M}{q}>C$, $z>C $,  the number of Farey 
fractions with denominators up to $M$ in the interval
\[
 \left[\frac aq , \frac aq+\frac{z}{Mq}\right]
\]
is at least $\pi^{-2} z (M/q)$.
\end{lemma}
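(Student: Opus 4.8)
The plan is to count, for each denominator $k \le M$, the number of fractions $h/k$ with $(h,k)=1$ lying in the interval $J = [a/q, a/q + z/(Mq)]$, and then sum over $k$. The key structural observation is that the fractions $h/k$ with denominator exactly $k$ that lie in $J$ are spaced $1/k$ apart, so naively there are about $|J| \cdot k = zk/(Mq)$ of them. However, we must be careful about two things: first, the coprimality condition, which introduces the factor $\varphi(k)/k$ on average and is the source of the $\pi^{-2} = 6/\pi^2 \cdot 1/6$... — more precisely, after summing $\varphi(k) k$ against the interval length, Mertens/Euler-type estimates will produce the constant involving $\sum \varphi(k)/k^2 \sim 6/\pi^2$ behaviour, ultimately yielding the $\pi^{-2}$ in the statement.

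The delicate point — and the one that the preceding discussion flags as the crux — is the role of the endpoint $a/q$. For small denominators $k$, the interval $J$ might contain \emph{no} fraction with denominator $k$ other than possibly those forced near $a/q$ itself, precisely because $a/q$ (a fraction of small denominator $q$) repels nearby rationals: any $h/k \ne a/q$ satisfies $|h/k - a/q| \ge 1/(kq)$. The hypothesis $M/q > C$ is exactly what guarantees the interval is long enough — of length $z/(Mq)$ with $z > C$ — that once $k$ is comparable to $M$ (say $M/2 < k \le M$), the spacing $1/k \approx 1/M$ is small compared to $|J| = z/(Mq)$ when $z$ is large, so each such $k$ contributes roughly $zk/(Mq) \gg z/q$ fractions. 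Thus the plan is to restrict attention to the range $k \in (M/2, M]$, where I would show each $k$ contributes at least $c\, z/q$ admissible coprime fractions, and then sum over the $\asymp M/2$ values of $k$ in that dyadic range to obtain a total of order $z M/q$.

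First I would fix $k \in (M/2, M]$ and count integers $h$ with $h/k \in J$, i.e. $h \in [ak/q, ak/q + zk/(Mq)]$, an interval of length $zk/(Mq) > z/(2q) > 1$ once $z > 2q/k$, which holds since $z > C$ and $k > M/2 > (C/2)q$. So there are at least $\lfloor zk/(Mq) \rfloor \ge zk/(Mq) - 1 \gg zk/(Mq)$ integers $h$ in that range. Next I would discard those $h$ for which $(h,k) > 1$; the standard bound is that the number of $h$ in an interval of length $L$ with $(h,k)=1$ is $\frac{\varphi(k)}{k} L + O(2^{\omega(k)})$, and since $L \gg z/q$ with $z$ large, the main term dominates the error provided $z/q$ exceeds a constant times $2^{\omega(k)}$ — this is where $z > C$ is used again, absorbing the divisor-type fluctuation. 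I would also remove the at most one or two fractions coinciding with or immediately adjacent to $a/q$. Summing $\frac{\varphi(k)}{k}\cdot \frac{zk}{Mq} = \frac{z\,\varphi(k)}{Mq}$ over $M/2 < k \le M$ and using $\sum_{k\le M}\varphi(k) \sim \frac{3}{\pi^2}M^2$ (so the dyadic tail contributes $\asymp M^2/\pi^2$) gives a total $\gg \pi^{-2} z (M/q)$.

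The main obstacle I anticipate is \emph{not} the asymptotic bookkeeping but controlling the error terms uniformly so that the clean lower bound $\pi^{-2} z(M/q)$ holds with a single absolute threshold $C$ rather than merely asymptotically. Specifically, the term $2^{\omega(k)}$ from the coprimality sieve and the $-1$ losses from flooring must be genuinely dominated, which forces me to take $z$ (and $M/q$) larger than a fixed constant $C$ and possibly to sacrifice a constant factor in the count before arriving at the stated $\pi^{-2}$; reconciling the sieve constant $3/\pi^2$ over the dyadic block with the advertised $\pi^{-2}$ leaves comfortable room, so the inequality should survive with $C$ chosen large enough. I would therefore organize the proof to first establish a bound of the shape $(c - o(1)) z(M/q)$ with $c > \pi^{-2}$ for $k$ in the top dyadic range, then fix $C$ so that the $o(1)$ and the endpoint corrections are swallowed.
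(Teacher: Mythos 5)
Your proposal founders on exactly the point the lemma is designed to handle, and which the paper flags just before the statement: the hypotheses allow $q$ to be as large as $M/C$, while $z$ need only exceed an absolute constant. For a fixed $k\in(M/2,M]$ the admissible numerators $h$ lie in an interval of length $zk/(Mq)\le z/q$, and when $q>z$ this length is \emph{less than} $1$, so your step ``there are at least $\lfloor zk/(Mq)\rfloor\ge zk/(Mq)-1\gg zk/(Mq)$ integers $h$'' gives nothing: the floor is $0$, and the $-1$ loss per $k$, summed over $\asymp M$ values of $k$, is an error of size $M$ that can dwarf the target $zM/q$. Your justification ``$z/(2q)>1$ once $z>2q/k$'' contains an algebra slip --- $z/(2q)>1$ requires $z>2q$, which is not implied by $z>C$ and $M/q>C$. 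In this regime most $k$ contribute no fraction at all, and the total $\asymp zM/q$ arises only because the exceptional $k$ that do contribute are governed by a congruence: an $h$ with $0<qh-ak=m<zk/M$ exists precisely when $-ak\bmod q$ falls in a short window, so one must establish equidistribution of $ak\bmod q$ as $k$ ranges over $(M/2,M]$. The paper does this by parametrizing the solutions of $qh-ak=m$ as $k=qj-\overline{a}m$, $h=aj-sm$, observing that $(h,k)=1$ becomes $(j,m)=1$, and counting coprime pairs $(j,m)$ in a box of side lengths $\asymp z$ and $\asymp M/q$, both of which exceed $C$, so the lattice count is honest.

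A second, independent problem: even in the regime $z>2q$ where your per-$k$ interval does have length greater than $1$, the sieve error $O(2^{\omega(k)})$ in your coprime count cannot be absorbed by taking $z>C$ with $C$ absolute, since $2^{\omega(k)}$ is unbounded in $k$; summed over $k\in(M/2,M]$ it contributes $\gg M\log M$, which exceeds the main term $\asymp zM/q$ unless $z/q\gg\log M$. The paper's change of variables moves the M\"obius inversion onto the divisors of the small variable $m<z/2$, splits the resulting sum over $d$ at $\min(z,M/q)/2$, and controls the tail using cancellation in $\sum\mu(d)$ via the prime number theorem; this is where the uniform absolute constant $C$ genuinely comes from.
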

\begin{rem}
 Notice that for $z<1$ the only possible Farey fraction 
 is
 $a/q$, so there is a sudden change in 
 behaviour
 when $z$ increases
 (especially if $q$ is much smaller than $M$).
\end{rem}

\begin{proof}
We need to count the coprime $h,k$ with $1\le k \le M$ such that
 \[
  0<\frac hk-\frac aq <\frac{z}{Mq}
 \]
which is equivalent to
\[
 0<qh-ak<\frac{z k}{M}.
\]
By restricting to $M/2<k<M$ we see that the number of Farey fractions we want to control is at least
\[
 J = \sum_{\frac M2< k< M} \sum_{m<\frac z2}  \sum_{\substack{qh-ak=m\\(h,k)=1} 
} 1.
\]
We can parametrize the integer solutions of $qh-ak=m$ as
\[
 k=qj-\overline{a}m \qquad h=aj-sm
\]
with $1\le \overline a\le q$ the inverse of $a$ modulo $q$ and $s=\frac{a\overline a-1}{q}$. Also, 
$(h,k)=1$ is equivalent to $(j,m)=1$, and 
thus
\[
 J=\sum_{m<\frac z2} \sum_{\substack{\frac x2<j-\delta m<x\\ 
(j,m)=1}}  1
\]
with $x=M/q$ and $\delta=\overline a/q$. Since $\sum_{d\mid l} \mu(d)=0$ for 
$l>1$ and equals 1 for $l=1$ we have
\[
 J= \sum_{m<\frac z2} \sum_{\substack{\frac x2<j-\delta m<x}}
\sum_{\substack{d\mid j \\ d\mid m}} \mu(d).
\]
By writing $j=j_* d$ and $m=m_* d$ and rearranging the sums we have
\[
 J= \sum_{d<\frac z2} \mu(d) F(d)
\]
with
\[
F(d)=\sum_{m_*<\frac{z}{2d}} 
\sum_{\frac{x}{2d}<j_*-\delta m_*<\frac{x}{d}} 1.
\]
We split the sum as
\[
 J=\sum_{d<\frac y2} \mu(d) F(d) +\sum_{\frac{y}2\le d <\frac z2} \mu(d) F(d)
\]
with $y=\min(z,x)$. We can estimate
\[
 F(d)=\sum_{m_*<\frac{z}{2d}} \left( \frac{x}{2d} +O(1) \right) = \frac{zx}{4d^2}+O\left(\frac 
{z+x}d\right)
\]
so
\[
\sum_{d<\frac y2} \mu(d)F(d)= \sum_{d<\frac y2} \mu(d) \left[ \frac{zx}{4d^2} + 
O\left(\frac{z+x}{d}\right) \right]
\]
so by using $\sum_{d=1}^{\infty} \mu(d)d^{-2}=\zeta(2)^{-1}=6/\pi^2$ we have
\[
 \sum_{d<\frac y2} \mu(d)F(d)=\frac{zx}{4} \frac{6}{\pi^2} 
+O\left(\frac{zx}{y}\right)+O((z+x)\log y)
\]
so for $z,x>C$ with $C$ large enough we have
\[
 \sum_{d<\frac y2} \mu(d)F(d)\ge \frac{zx}{4}\frac{5}{\pi^2}.
\]
To control the other sum let us look at $S_{D,D'}=\sum_{D\le d<D'} \mu(d)F(d)$ 
for any $x\le D\le D'\le 2D$. Rearranging the sums we have
\[
 |S_{D,D'}|\le \sum_{m_*<\frac{z}{2D}} \sum_{\frac{x}{4D}<j_*-\delta 
m_*<\frac{x}D}  \left| \sum_{a_{j_*,m_*}<d<b_{j_*,m_*}} \mu(d) \right|
\]
for some $a_{j_*,m_*},b_{j_*,m_*}$ in the interval $[D,2D]$.  Thus, applying 
the prime number theorem we have
\[
 S_{D,D'}\ll  \sum_{m_*<\frac{z}{2D}} \sum_{\frac{x}{4D}<j_*-\delta 
m_*<\frac{x}D}  \frac{D}{(\log D)^2} \ll \sum_{m_*<\frac{z}{2D}} \frac{D}{(\log 
D)^2} \ll \frac{z}{(\log D)^2}.
\]
Then, by splitting the sum into dyadic intervals we have
\[
 \sum_{\frac{y}{2}\le d <\frac{z}2} \mu(d) F(d)\ll \sum_{\frac{y}{2}\le 2^n
<\frac{z}2} \frac{z}{(\log 2^n)^2} \ll z
\]
so this sum is at most $zx/4\pi^2$ for $x>C$ for $C$ large enough, so 
that we finally get
\[
 J\ge \frac{zx}4 \frac{5}{\pi^2}-\frac{zx}4 \frac 1{\pi^2}=\frac{zx}{\pi^2}.
\]

\end{proof}

Now we are going to build curves with many integral points by using Farey fractions. But we are going to do it just in the ``irrational'' case, namely when $w$ is not near to a rational with small denominator. Afterwards we 
shall
see that in the other case, the ``rational'' case, we 
shall
need 
other tools.

\begin{theorem}[Lower bound for ``irrational'' slopes]
\label{lower_bound_arithmetic}
There exists a constant $C>1$ such that:
for every $r, \ell>1$ with $800C^2 r^{1/3}<\ell<r^{2/3}$ and  $w\in (0, 1)$, if there is no rational $a/q$ with $q\le 800C^4 r^{2/3}/\ell$ and $|w-a/q|\le 1/qr^{1/3}$ then 
 we can build a curve $\Gamma$ in $\mathbb R^2$ 
satisfying the following properties:
\begin{itemize}
\item[(i)] $\Gamma$ is twice differentiable and its radius of curvature is 
always in the range $[C^3r/32, 32C^3r]$,
\item[(ii)] the length of $\Gamma$ is less than $\ell$,
\item[(iii)] the initial slope of $\Gamma$ is $w$,
\item[(iv)] $|\Gamma\cap \mathbb Z^2|\ge \frac{1}{3200\pi^2C} \ell r^{-1/3}$.
\end{itemize}
\end{theorem}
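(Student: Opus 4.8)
The plan is to combine the Farey-tangent construction of Lemma~\ref{curve_farey} with the counting estimate of Lemma~\ref{farey_in_intervals}. The key realization is that Lemma~\ref{curve_farey} produces a curve whose geometric parameters (length, radius of curvature) and arithmetic content (number of integral points) are all governed by two quantities: the scale $M$ of the denominators of the Farey fractions used, and the length $|I|$ of the interval containing those fractions. Concretely, a curve built from the Farey fractions in an interval $I$ using denominators up to $M$ has radius of curvature $\asymp M^3$, length $\ll M^3|I|$, and roughly $|F_M \cap I|$ integral points. So the first step is to choose $M$ so that $M^3 \asymp r$, i.e. $M \asymp r^{1/3}$ (more precisely $M=[C r^{1/3}]$ for the constant $C$ from Lemma~\ref{farey_in_intervals}), which forces the radius of curvature into the required band $[C^3 r/32, 32 C^3 r]$ after tracking the constants through Lemma~\ref{curve_farey}. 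The length constraint (ii) then dictates how large $|I|$ may be: since Length$(\Gamma) \le 32 M^3 |I| \asymp r|I|$, we may take $|I| \asymp \ell/r$.

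Having fixed $M$ and the scale of $|I|$, the second step is to position the interval $I$ so that its left endpoint is the initial slope $w$ (this secures property~(iii) once we align the frame and invoke clause~(iv) of Lemma~\ref{curve_farey}, noting the initial slope there is a mediant of the first two Farey fractions, so a small adjustment of $I$ is needed), and then to \emph{count} the Farey fractions it contains. This is where Lemma~\ref{farey_in_intervals} enters, and it is the crux of the argument: that lemma gives a lower bound $\pi^{-2} z (M/q)$ on the number of Farey fractions in $[a/q, a/q + z/(Mq)]$, but only when the left endpoint is itself a rational $a/q$ with $M/q > C$. The hypothesis of the present theorem --- that there is \emph{no} rational $a/q$ with $q \le 800 C^4 r^{2/3}/\ell$ and $|w - a/q| \le 1/(q r^{1/3})$ --- is precisely the ``irrationality'' condition ensuring that $w$ is not trapped near a low-denominator rational. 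The plan is to use this hypothesis to locate a rational $a/q$ with controlled, not-too-small denominator near $w$ to serve as the anchor point of the interval, and then to apply Lemma~\ref{farey_in_intervals} with $z \asymp \ell r^{-1/3}$ (so that $z/(Mq) \asymp |I|$), yielding $\gg z(M/q) \asymp \ell r^{-1/3}$ Farey fractions in the interval.

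The main obstacle I anticipate is the bookkeeping around the denominator $q$ of the anchoring rational. Lemma~\ref{farey_in_intervals} requires both $M/q > C$ and $z > C$; to get the clean final bound $|\Gamma \cap \mathbb Z^2| \ge (3200 \pi^2 C)^{-1}\ell r^{-1/3}$ one wants $q$ to be bounded by an absolute constant (or at worst by something like $C^2$), since the count $\pi^{-2} z (M/q)$ degrades linearly in $q$. The negation of the theorem's hypothesis is what supplies such a $q$: because no rational with denominator up to $800 C^4 r^{2/3}/\ell$ lies within $1/(q r^{1/3})$ of $w$, a Dirichlet-type or continued-fraction argument lets one find a convergent $a/q$ to $w$ whose denominator sits in a favorable range and whose distance to $w$ is compatible with the interval width $z/(Mq)$. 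The delicate point is simultaneously arranging that (a) the interval $[a/q, a/q + z/(Mq)]$ of width $\asymp \ell/r$ has $w$ as (near) a genuine interior Farey fraction so that the initial slope can be set to $w$, (b) the denominator $q$ is small enough that the count is $\gg \ell r^{-1/3}$ with the stated constant, and (c) the parameters $z$ and $M/q$ both exceed $C$. Once these are reconciled, properties~(i)--(iv) follow by feeding $M$ and $I$ into Lemma~\ref{curve_farey} and reading off the conclusions, with the conditions $800 C^2 r^{1/3} < \ell < r^{2/3}$ exactly guaranteeing that $z \asymp \ell r^{-1/3}$ and $M/q$ are both large and that $|I| \le 1/30$ as required by Lemma~\ref{curve_farey}.
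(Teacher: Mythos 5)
Your overall architecture coincides with the paper's: take $M=C[r^{1/3}]$ so the curvature from Lemma~\ref{curve_farey} lands in the right band, take an interval $I$ of width $\asymp \ell/r$ anchored at a Dirichlet approximation $a/q$ of $w$ (so that the length bound $32M^3|I|$ is $\le \ell$), count the Farey fractions in $I$ with Lemma~\ref{farey_in_intervals}, and then adjust the initial slope. But your analysis of the ``main obstacle'' is backwards, and the plan you propose to resolve it would fail. First, the count does \emph{not} degrade in $q$: for an interval of fixed width $\Delta=z/(Mq)$ one has $z=\Delta Mq$, so the lower bound $\pi^{-2}z(M/q)=\pi^{-2}\Delta M^2$ is independent of $q$ --- it is just the expected density of $F_M$ times $|I|$. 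Second, and more importantly, the hypothesis of the theorem does \emph{not} supply a rational with denominator bounded by an absolute constant; it supplies the opposite. Dirichlet gives some $a/q$ with $q<r^{1/3}$ and $|w-a/q|\le 1/(qr^{1/3})$, and the hypothesis (no such rational with $q\le 800C^4 r^{2/3}/\ell$) forces this $q$ to be \emph{large}: $q>800C^4 r^{2/3}/\ell$. This largeness is precisely what makes $z=|I|Mq \asymp \frac{\ell}{r}\cdot r^{1/3}\cdot q$ exceed $C$, which is the condition you need to invoke Lemma~\ref{farey_in_intervals} at all. A rational with $q=O(1)$ lying within $1/(qr^{1/3})$ of $w$ is exactly the ``rational'' case excluded here (and treated in the next section by a completely different construction); seeking one is both generally impossible and contrary to the hypothesis you are given.

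The large denominator is also needed a second time, at a step your sketch compresses into ``a small adjustment of $I$.'' The curve from Lemma~\ref{curve_farey} starts with slope equal to the mediant $\frac{a+h_2}{q+k_2}$, not $w$, and one must either prolong the curve by a circular arc (if $w$ is below the mediant) or \emph{delete} an initial piece (if $w$ is above it). In the deletion case you lose up to $1+M^2|w-a/q|\le 1+M^2/(qr^{1/3})$ integral points, and this is an acceptable loss --- namely $\le \frac{1}{400C^2}\ell r^{-1/3}$, absorbed into the final constant $\frac{1}{3200\pi^2 C}$ --- only because $q>800C^4 r^{2/3}/\ell$ again. Similarly, the slope discrepancy $|{\tan\theta}-w|\le \frac1{qM}+\frac1{qr^{1/3}}$ is $\ll \ell/r$ only thanks to the same lower bound on $q$, and this controls the extra length of the prolonging arc. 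Without recognizing that the theorem's hypothesis is a \emph{lower} bound on the Dirichlet denominator, neither the application of Lemma~\ref{farey_in_intervals} nor the slope correction can be closed, so as written the proposal has a genuine gap.
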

\begin{proof}
Dirichlet lemma tells us that there is always an irreducible rational $a/q$ with $q<r^{1/3}$ such that $|w-\frac aq|\le \frac{1}{qr^{1/3}}$, and by our hypotheses we can assume that $q> 800C^4 r^{2/3}/\ell$.

We shall be able to choose $C$ as the maximum of $8\pi^2$ and the constant in 
the statement of Lemma \ref{farey_in_intervals}. Pick $M=C[r^{1/3}]$. By 
applying Lemma \ref{curve_farey} with the interval $I=[a/q,a/q+\ell/400C^3r]$ and 
by Lemma \ref{farey_in_intervals}, since $M>Cq$ and $z=\frac{\ell 
M}{r}\frac{q}{400C^3}> C$ we can build a curve $\tilde \Gamma$ with at least
\[
 \frac{z M/q}{\pi^2}-1\ge \frac{z M/q}{2\pi^2}=\frac{1}{2\pi^2} 
\frac{\ell}{400C^3r}M^2\ge \frac{1}{1600\pi^2 C}  \frac{\ell}{r^{1/3}}
\]
points with integer coordinates, with radius of curvature always in the range
$[M^3/16,16M^3]$, length at most $32M^3 \ell/400C^3r\le \frac 34 \ell$ and with 
initial slope
\[
 \tan\theta=\frac{a+h_2}{q+k_2}.
\]
In order to finish building our curve $\Gamma$, we only need to fix the problem 
that the initial slope should be $w=\tan\theta_w$ instead of $\tan\theta$, but
\[
  |\tan\theta-\tan\theta_w|\le 
\left|\frac{a+h_2}{q+k_2}-\frac{a}{q}\right|+\left|\frac{a}{q}-w\right|\le 
\frac{1}{qM}+\frac{1}{qr^{1/3}}\le \frac{1}{400C^4}\frac{\ell}{r}.
\]
Then, if $w\le\frac{a+h_2}{q+k_2}$, by prolonging $\tilde\Gamma$ to the left 
maintaining the curvature constant we can make sure that the initial slope is 
$w$ and the length of $\Gamma$ will be the length of $\tilde\Gamma$ plus at most 
the length of an arc of a circle of radius $16M^3$ and angle $\theta-\theta_w$, 
namely
\[
 \text{Length}(\Gamma)\le \frac{3}{4} \ell + (\theta-\theta_w)(16M^3)\le 
\frac{3}{4} \ell + \frac{1}{400C^4} \frac{\ell}{r}16 M^3\le  \ell,
\]
by the inequality $\theta-\theta_w\le \tan\theta-\tan\theta_w$. On the other 
hand, in the case $w>\frac{a+h_2}{q+k_2}$, we shall delete the first part of 
$\tilde \Gamma$ to get $\Gamma$, precisely until the slope is $w$. In doing so, 
we delete some of the points with integer coordinates belonging to $\tilde 
\Gamma$.  By construction, the number of points 
which
we counted previously 
and we are deleting now is at most 1 plus the number of Farey fractions with denominator up to 
$M$ in the interval
\[
 \left[\frac aq, w\right].
\]
Since the distance between two consecutive Farey fractions is at least 
$M^{-2}$, the number of them in that interval is at most
\[
 1+ M^2 |w-\frac aq|\le 1+ M^2 \frac{1}{qr^{1/3}}\le   \frac{1}{400 
C^2} \ell r^{-1/3}.
\]
Thus, the number of 
``surviving" integral points
is at least
\[
 \frac{1}{1600\pi^2 C}  \frac{l}{r^{1/3}}- \frac{1}{400 
C^2} \ell r^{-1/3}\ge \frac{1}{3200\pi^2 C}  \frac{\ell}{r^{1/3}}.
\]

\end{proof}

\section{Lower bound for ``rational'' slopes}

Now we are going to handle the ``rational'' case, that is when $w$ is near to a rational $a/q$ with small $q$. In this case, the method of Farey fractions stops working when $\ell<r^{2/3-\epsilon}$ , since then there will not be any rational with denominator up to $r^{1/3}$ in the interval $(\frac aq, \frac aq+\frac{\ell}{r})$. 

We know from the proof of Theorem \ref{upper_bound_geometric} that in this case we can essentially control the amount of points in the curve by the number of lines $y=\frac{a}{q}x+\frac{n}{q}$, $n\in \mathbb Z$ that touch the curve (since by convexity the curve cannot have 
more than
two points on a line).

Then, in order to build a curve with many integral points, we 
shall
explicitly choose a sequence of  integral points $((x_n,y_n))_{0\le n\le  N}$, with $(x_n,y_n)$ belonging to the the line $y=\frac aq x+\frac{n}{q}$ such that they can be put on a curve with the 
requested
curvature, slope and length.

From the point of view of slopes, we 
can
say that the fractions relevant to the problem can be parametrized in terms $a,q$. We shall split the proof into two 
parts:
the first one will work for $w$ very near $a/q$ (with $q$ small), and the second one for $w$ near to $a/q$, but not that much.

\begin{theorem}[First lower bound for ``rational''  slopes]\label{firstlower}
\label{lower_bound_very_near}
Let $C>1$, $r, \ell>1$ with $(800C^4)^2 r^{1/3}<\ell<r^{2/3}$,
$w \in (0, 1)$ and $|w-a/q|\le \ell/25r$ for some irreducible rational $a/q$ with 
$q<800C^4 r^{2/3}/\ell$, we can build a curve $\Gamma$ in $\mathbb R^2$ 
satisfying the following properties:
\begin{itemize}
\item[(i)] $\Gamma$ is twice differentiable and its radius of curvature is 
always in the range $[r/16, 16r]$,
\item[(ii)] the length of $\Gamma$ is less than 
$\ell$,
\item[(iii)] the initial slope of $\Gamma$ is $w$,
\item[(iv)] $|\Gamma\cap \mathbb Z^2|\ge \frac 14+\frac 14 \frac 1{(800 C^4)^6} q\ell^2/r$.
\end{itemize}
\end{theorem}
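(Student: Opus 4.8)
The goal of Theorem \ref{firstlower} is to construct, in the regime where $w$ is \emph{very} close to a fixed rational $a/q$ (namely $|w-a/q|\le \ell/25r$), a curve with curvature of order $r$, length below $\ell$, initial slope exactly $w$, and roughly $q\ell^2/r$ integral points. The plan is to build the curve explicitly by prescribing a sequence of integral points $(x_n,y_n)_{0\le n\le N}$, where $(x_n,y_n)$ lies on the line $y=\tfrac aq x+\tfrac nq$, with consecutive points chosen so that the secant slopes increase slowly and monotonically, forcing strict convexity, and so that the accumulated curvature constraint and length constraint are met. The number of such lines we can fit is precisely the quantity we must control, and from Proposition \ref{inside_rational_parallelogram} together with Lemma \ref{lines_in_parallelogram} we already know that a curve confined to the relevant rational parallelogram hits at most $\asymp q\ell^2/r$ lines; the content here is showing we can actually \emph{realize} close to that many.

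First I would choose the horizontal steps $x_{n+1}-x_n$. To place one integral point on each line $y=\tfrac aq x+\tfrac nq$ we need $x_{n+1}-x_n$ to be a multiple of $q$ (so that the $y$-coordinate lands on an integer when the line index advances by the right amount), and I would take these steps to be as small as possible, of size $\asymp q$, so that over total horizontal extent $\asymp \ell$ we accumulate $N\asymp \ell/q$ steps in $x$ but the line index $n$ advances by $\asymp \ell^2 q/r\cdot$(constant) — this is where the $q\ell^2/r$ count comes from, since the vertical deviation of a convex arc from its initial tangent over horizontal length $\ell$ is $\asymp \ell^2/r$, and each unit of that deviation in the rotated frame corresponds to $q$ new lines. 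Concretely I would define the secant slopes to be $\tfrac aq + \tfrac{j}{q(\text{step}_j)}$-type increments, exactly mirroring the Farey computation in Lemma \ref{curve_farey} but now with a \emph{fixed} denominator $q$ rather than ranging Farey denominators; the arithmetic identity $\tfrac{a}{q}-\tfrac{b}{c}=\tfrac{ac-bq}{qc}$ again produces clean reciprocal-integer slope gaps.

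Next I would verify the three analytic constraints. For strict convexity and the curvature bound, I would show the consecutive secant-slope differences and the segment lengths $|AB|$ satisfy the same ratio hypotheses as in Lemma \ref{curve_farey}, namely $\alpha/\beta\in[-3,-1/3]$ and $|AB|/\beta$ in a window of width a bounded factor around $r$, so that Proposition \ref{Glemma} (the interpolation lemma invoked earlier, here with target radius $r$ rather than $M^3$) produces a twice-differentiable joining arc with curvature in $[r/16,16r]$. The length bound follows as before from $\text{Length}\le \Delta x+\Delta y\le 2\Delta x$ together with the choice keeping $\Delta x\lesssim \ell$. Finally, to make the \emph{initial} slope equal to $w$ exactly, I would use the same prolong-or-truncate device as in the proof of Theorem \ref{lower_bound_arithmetic}: since $|w-a/q|\le \ell/25r$ is small, either extend the curve to the left along a constant-curvature arc through the extra angle $\theta-\theta_w\le\tan\theta-\tan\theta_w$ (adding length $\lesssim (\ell/r)\cdot r=\ell$ times a small constant), or delete the initial portion up to where the slope reaches $w$, discarding at most $\asymp q|w-a/q|\cdot(\text{something})$ of the counted points — and here the hypothesis $|w-a/q|\le\ell/25r$ is exactly what keeps this loss a small fraction of the main term $q\ell^2/r$.

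The main obstacle I expect is the bookkeeping in the truncation step: unlike the irrational case, where deleted points were bounded by spacing of Farey fractions, here the deleted lines when we shave off the initial segment must be shown to number at most a \emph{constant fraction} of the total $q\ell^2/r$, so that the surviving count still dominates. This forces the precise constant $\ell/25r$ in the hypothesis and the explicit loss factor $1/(800C^4)^6$ in conclusion (iv): one has to track how the closeness of $w$ to $a/q$ limits the initial-angle adjustment, convert that angle into a horizontal length deleted, and convert that into a count of deleted lines, all while keeping the denominators and the curvature window consistent. Getting these constants to close — showing the deleted portion is genuinely smaller than the constructed portion rather than merely comparable — is the crux, and it is why the theorem isolates the ``very near'' regime: only when $w$ hugs $a/q$ this tightly can the slope correction be absorbed without destroying the quadratic-in-$\ell$ point count.
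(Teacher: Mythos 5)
Your overall skeleton --- integral points placed one per line $y=\frac aq x+\frac nq$, secant slopes computed arithmetically, interpolation via Proposition \ref{Glemma} with target radius $r$, and a final adjustment of the initial slope --- is indeed the paper's. But there is a genuine quantitative error at the heart of your construction: you take the horizontal steps ``as small as possible, of size $\asymp q$,'' whereas the paper takes $\Delta x_j\asymp\Omega$ with $\Omega=q[kr/q^2\ell]\asymp r/(q\ell)$, which is \emph{much larger} than $q$ in this regime (the paper notes $\Omega>800Cq$). The step size is forced by the curvature constraint, not by integrality: the secant slope between consecutive points on consecutive lines is $\frac aq+\frac{1}{q\Delta x_j}$, and since a convex arc of length at most $\ell$ and radius of curvature at least $r/16$ can only vary its slope by $O(\ell/r)$, one needs $\frac{1}{q\Delta x_j}=O(\ell/r)$, i.e.\ $\Delta x_j\gtrsim r/(q\ell)$. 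With steps of size $\asymp q$ the slope offset from $a/q$ is $\asymp 1/q^2\gg\ell/r$, so no curve with the required curvature and initial slope can pass through your points. Your count is also internally inconsistent: if each step advances the line index by exactly one, the number of integral points equals the number of steps, yet you assert $N\asymp\ell/q$ steps while claiming $\asymp q\ell^2/r$ lines are hit; these quantities differ. The correct count falls out of the correct step size: the horizontal extent $N\Omega\lesssim\ell$ forces $N\lesssim q\ell^2/r$, and the paper realizes $N=[(1/k)r/q\Omega^2]\asymp q\ell^2/r$ up to the constant factor $k^{-3}$ with $k=(800C^4)^2$.

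A second, more minor, misreading: you locate the crux in a truncation step and in showing that the deleted lines form a small fraction of $q\ell^2/r$. In this theorem there is no truncation and no loss of points; the hypothesis $|w-a/q|\le\ell/25r$, combined with the bound $\tan\theta_1-a/q\le\ell/(800r)$ coming from the construction, is used only to bound the angle $|\tan\theta_1-w|<\frac1{20}\frac{\ell}{r}$ of the circular arc prepended on the left, so that the added length, at most $16r\cdot\frac{1}{20}\frac{\ell}{r}$, keeps the total under $\ell$. The keep-only-the-first-$M$-points device you describe belongs to Theorem \ref{lower_bound_near}, where $\Omega$ is instead chosen as $q\lfloor 1/(q(qw-a))\rfloor$ and the point count is governed by $q\ell|w-a/q|$ rather than $q\ell^2/r$.
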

\begin{proof}
We can assume that  $q\ell^2/r\ge (800 C^4)^6$ since otherwise we just need to build a curve with at least one integral point, which is trivial. Let us suppose that $w\ge a/q$, the other case being similar. Define $\Omega=q[kr/q^2 \ell]$ with $k=(800C^4)^2$  and consider the sequence $(x_j,y_j)_{0\le  j\le N}$ with $x_0=0,y_0=0$,
\[
 \Delta x_j=\Omega -\overline{a} -(j-1) q[\Omega^3/r]
\]
\[
 \Delta y_j=\frac aq \Delta x_j +\frac 1q,
\]
with $\Delta b_j=b_j-b_{j-1}$,  $\overline a$  the number between $1$ and $q$ which is the inverse of $a$ modulo $q$ and $N=[(1/k) r/q\Omega^2]$. The definition implies that $x_j,y_j\in \mathbb Z$. Also, our hypotheses imply that $\Omega>800Cq$, $\Omega^3/r\ge 1$ so that
\begin{equation}\label{deltaxjbound}
 \Omega\left(1-\frac{1}{400C}\right)<\Omega-q-Nq\Omega^3/r<\Delta x_j<\Omega
\end{equation}
and in particular $\Delta x_j>0$.

We are going to see that it is possible to build a curve $\Gamma$ 
which satisfies
the conditions of the statement 
and
contains the previous sequence
of points.
First we fix the slope of $\Gamma$ at the point $(x_j,y_j)$, with $1\le j\le N-1$, to be
\[
 \tan \theta_j=\frac{\Delta y_{j+1}+\Delta y_j}{\Delta x_{j+1}+\Delta x_j}=\frac aq +\frac 1q \frac{2}{\Delta x_{j+1}+\Delta x_j}.
\]
Now, to see that a curve satisfying the curvature condition in the statement 
does exist, it is enough to see that we can apply Proposition \ref{Glemma} with 
$A=(x_{j-1},y_{j-1})$, $B=(x_{j},y_{j})$, $T_A$ and $T_B$  lines with slopes 
$\tan\theta_{j-1}$ and $\tan\theta_{j}$ respectively, and $\rho=r$. Since the 
line between $A$ and $B$ has slope
\[
 \tan\theta=\frac{\Delta y_j}{\Delta x_j}=\frac aq +\frac 1q \frac{1}{\Delta x_j}
\]
by using (\ref{deltaxjbound}) we have
\[
 \tan\theta_{j-1}-\tan\theta=\frac 1q\frac{\Delta(\Delta x_{j})}{\Delta x_j (\Delta x_{j-1}+\Delta x_j)}=-\frac{[\Omega^3/r]}{2\Omega^2(1-\epsilon)^2},
\]
\[
 \tan\theta_j-\tan\theta=-\frac 1q\frac{\Delta(\Delta x_{j+1})}{\Delta x_j (\Delta x_{j+1}+\Delta x_j)}=\frac{[\Omega^3/r]}{2\Omega^2(1-\epsilon')^2},
\]
with $0<\epsilon,\epsilon'<1/400C$, so
\[
 \frac{\tan\theta_{j-1}-\tan\theta}{\tan\theta_j-\tan\theta}\in [-1/(1-1/400C)^2,-(1-1/400C)^2]
\]
and since $C\ge 1$
, 
by Lemma \ref{trigonometry}
,
 we have
\[
 \frac{\alpha}{\beta}=\frac{\tan(\theta_{j-1}-\theta)}{\tan(\theta_{j}-\theta)} \in [-3,-1/3].
\]
Moreover $|AB|=s\Delta x_j$ with $1<s<2$ and $\beta=\tan(\theta_j-\theta)=t(\tan\theta_j-\tan\theta)$ with $1/2<t<3$, hence
\[
 \frac{|AB|}{\beta \rho}=\frac{s\Delta x_j}{rt(\tan\theta_j-\tan\theta)} =\frac{s}{t}\frac{\Omega(1-\epsilon)}{r[\Omega^3/r]/2\Omega^2(1-\epsilon')^2}=
\]
\[ 
 =\frac{2s}{t} (1-\epsilon)(1-\epsilon')^2\frac{\Omega^3/r}{[\Omega^3/r]}\in [1/3,9],
\]
for some $0<\epsilon,\epsilon'<1/400C$,
so indeed we can build a suitable curve between $A$ and $B$.
By convexity
,
the length of $\Gamma$ is at most
\[
 \sum_{j\le N} (\Delta x_j+\Delta y_j)\le \sum_{j\le N} 3\Delta x_j \le 3(\frac 1k \frac r{q\Omega^2})\Omega\le \frac{6}{k^2}l\le \frac{\ell}{800}.
\]
Then, the curve $\Gamma$ we have just built, beginning 
at
the point $(x_1,y_1)$, 
satisfies all the requirements but 
not necessarily
the one about the initial slope. By 
(\ref{deltaxjbound}) we have
\[
 \tan\theta_1-\frac{a}{q}=\frac 1q \frac{2}{\Delta x_2 +\Delta x_1}\le \frac 1q \frac{1}{(1-1/400C)\Omega}\le \frac{4}{k}\frac{\ell}{r} \le \frac{\ell/r}{800}.
\]
so the condition $w-a/q<\ell/25r$ implies that
\[
 |\tan \theta_1-w|<\frac 1{20}\frac{\ell}{r}.
\]
Thus, as in the proof of Theorem \ref{lower_bound_arithmetic}, it is enough to 
enlarge the curve $\Gamma$ by an arc of a circle of radius at most $16r$ and 
angle at most $\ell/20r$. Then, the expanded curve will satisfy the initial 
condition and have length at most
\[
 \frac{\ell}{800}+16r\left(\frac 1{20}\frac lr\right)< \ell.
\]
\end{proof}

\begin{theorem}[Second lower bound for ``rational'' slopes]\label{lower_bound_near}
Let $C>1$,  $r, \ell>1$ with $(800C^4)^2 r^{1/3}<\ell<r^{2/3}$ and  $w=\tan\theta_w$ with 
$0<w<1$ and $\ell/25r<|w-a/q|\le 1/qr^{1/3} $ for some irreducible rational $a/q$ with 
$q<800C^4 r^{2/3}/\ell$.  We can build a curve $\Gamma$ in $\mathbb R^2$ 
satisfying the first three properties in Theorem \ref{lower_bound_very_near} and
\[
|\Gamma\cap \mathbb Z^2|\ge \frac 14+ \frac 14 \frac{1}{(800C^4)^6}q\ell  \left|w-\frac aq \right|.
\]
\end{theorem}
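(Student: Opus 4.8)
The plan is to follow the construction of Theorem~\ref{firstlower} verbatim, changing only the scale $\Omega$ of the horizontal spacings so that the chords between consecutive integral points have slope close to $w$ rather than to $a/q$. This change is forced: since $|w-a/q|>\ell/25r$, an arc joining a tangent of slope $\approx a/q$ to one of slope $w$ would have length $\approx r|w-a/q|$, which need not be $\ll\ell$, so we cannot start the curve near $a/q$ as in Theorem~\ref{firstlower}. Assume $w>a/q$ (the case $w<a/q$ being symmetric) and dispose of the trivial case $q\ell(w-a/q)<(800C^4)^6$, where one integral point already meets the bound. Set $k=(800C^4)^2$, $\Omega=q[\,1/(q^2(w-a/q))\,]$, and place points $(x_j,y_j)$, $0\le j\le N$, on the lines $y=(a/q)x+j/q$ by the same recipe $\Delta y_j=(a/q)\Delta x_j+1/q$, $\Delta x_j=\Omega-\overline a-(j-1)q[\Omega^3/r]$, so that $x_j,y_j\in\mathbb{Z}^2$ thanks to $\Delta x_j\equiv-\overline a\pmod q$. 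The two regime hypotheses give $\Omega\asymp 1/(q(w-a/q))$, together with $\Omega\ge r^{1/3}$ (hence $\Omega^3/r\ge1$, so the decrement is a positive integer) and $\Omega>Cq$.

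The decisive difference from Theorem~\ref{firstlower} is that $N$ will be fixed by the length rather than by the curvature: I would take $N=[\,\tfrac1k\,\ell q(w-a/q)\,]$. Then $\sum_{j\le N}\Delta x_j\le N\Omega\le\ell/k$, and since $\Delta y_j\le2\Delta x_j$ the length before correction is $\le3\ell/k\ll\ell$. Crucially, this choice keeps the spacings almost constant: using $(w-a/q)r>\ell/25$ one checks $(N-1)q[\Omega^3/r]\ll(\ell/(k(w-a/q)r))\,\Omega<\Omega/2$, so $\Delta x_j\in[\Omega/2,\Omega]$ for every $j\le N$. With the tangent slopes set to $\tan\theta_j=(\Delta y_{j+1}+\Delta y_j)/(\Delta x_{j+1}+\Delta x_j)$, the second-difference computation of Theorem~\ref{firstlower} goes through unchanged and yields $\alpha/\beta\in[-3,-1/3]$ and $|AB|/(\beta\rho)\in[1/3,9]$ for all $j$; Proposition~\ref{Glemma} then builds a twice-differentiable arc through the $N+1$ points with radius of curvature in $[r/16,16r]$. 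Since $N\asymp\ell q(w-a/q)/k$ while the target carries the far smaller factor $1/k^3$, there is an enormous margin.

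It remains to make the initial slope exactly $w$. The initial chord slope is $a/q+1/(q(\Omega-\overline a))$, and the key estimate is that, because the argument of the bracket in $\Omega$ exceeds $1$ by a large factor, this differs from $w$ by only $O(q^2(w-a/q)^2)$. Now $q(w-a/q)\le r^{-1/3}$ forces this gap to be $O(r^{-2/3})$, and the hypothesis $\ell>(800C^4)^2r^{1/3}$ gives $r^{-2/3}<\ell/(kr)$; hence the slope can be corrected, exactly as in Theorems~\ref{lower_bound_arithmetic} and~\ref{firstlower}, by prolonging (or, if need be, deleting) an initial arc of radius $\le16r$ and length $\le16r\cdot O(\ell/(kr))\ll\ell$, any deletion costing $O(1)$ points. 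Adding the correction to the $\le3\ell/k$ already spent keeps the total length below $\ell$, and the surviving $\ge N-O(1)$ integral points exceed $\tfrac14+\tfrac14\,(800C^4)^{-6}q\ell(w-a/q)$.

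I expect the main obstacle to be exactly this last calibration: one must choose the single scale $\Omega$ and the single length $N$ so that three requirements hold simultaneously --- the chord slopes cluster tightly enough around $w$ that the corrective arc is cheap, the spacings $\Delta x_j$ vary little enough that the curvature stays in $[r/16,16r]$, and the total length stays below $\ell$ --- and verifying that the window $\ell/25r<|w-a/q|\le1/(qr^{1/3})$ is precisely what makes these three compatible is where the real bookkeeping lies.
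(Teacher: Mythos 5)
Your proposal is correct and follows essentially the same route as the paper: the same choice $\Omega=q\lfloor 1/(q^2(w-a/q))\rfloor$, the same integral-point sequence and slope assignments fed into Proposition \ref{Glemma}, truncation of the number of points so that the length stays $\ll\ell/k$, and the same initial-slope correction using $q^2(w-a/q)^2\le r^{-2/3}<\ell/(kr)$. The only (harmless) deviations are that you keep $\asymp k^{-1}q\ell|w-a/q|$ points where the paper keeps $\lfloor k^{-3}q\ell|w-a/q|\rfloor$ and re-verify the near-constancy of the $\Delta x_j$ directly, whereas the paper observes that the curvature verification from Theorem \ref{firstlower} only used $\Omega>800Cq$ and $\Omega^3/r\ge 1$ and then truncates; both versions of the bookkeeping check out.
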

\begin{proof}
We can assume that $q\ell |w-a/q|\ge (800C^4)^6$. We also suppose that $w\ge a/q$. We proceed to build the sequence and the curve as in the proof of Theorem \ref{lower_bound_very_near}
, but define
\[
 \Omega=q\left\lfloor\frac{1}{q(qw-a)}\right\rfloor.
\]
By our hypotheses we again have $\Omega>800Cq$ and $\Omega^3/r\ge 1$, so that the curvature condition is satisfied (since those inequalities are the only thing about $\Omega$ that we used to get the curvature condition in the proof of Theorem \ref{lower_bound_very_near}). Now, we are going to keep just the part of the curve 
which
has the first 
\[
 M=\left\lfloor\frac{1}{(800C^4)^6} q\ell \left|w-\frac aq\right|\right\rfloor
\]
points from the sequence. This is possible since $M\le N=[(1/k)r/q\Omega^2]$
occurs whenever
\[
 \left|w-\frac aq\right|\ge \frac{16}{k^2} \frac {\ell}{r},
\]
which is true due to our 
hypothesis
$|w-a/q|\ge \ell/25r$. Thus, this curve also has the desired number of integral points. Moreover, its length is at most
 \[
 \sum_{j\le M} (\Delta x_j+\Delta y_j)\le \sum_{j\le M} 3\Delta x_j \le 3M\Omega\le \frac{\ell}{800}.
\]
It remains to see what happens with the initial slope. We have 
\[
 \Delta x_1=\Omega-\overline a=\frac{1}{qw-a}-\delta q
\]
for some $0<\delta<2$, hence the slope of the line between $(x_0,y_0)$ and $(x_1,y_1)$ is
\[
 \tan\theta=\frac aq +\frac 1q \frac{1}{\frac{1}{qw-a}-\delta q}=\frac aq+ \frac{w-a/q}{1-\delta q(qw-a)}=w+\frac{\delta(qw-a)^2}{1-\delta q(qw-a)}
\]
and since $|q(qw-a)|\le 1/4$ we have
\[
 0<\tan\theta-w<4(qw-a)^2\le \frac{4}{r^{2/3}}\le \frac{1}{1600} \frac{\ell}{r}.
\]
Moreover
\[
 0<\tan\theta_1-\tan\theta\le \frac{\Omega}{r}\le\frac{1}{r(qw-a)}\le \frac 1{1600}  \frac{\ell}{r}
\]
since we assumed $q\ell |w-a/q|\ge (800C^4)^6$. Thus, finally we have
\[
 0<\tan\theta_1-w<\frac 1{800}  \frac{\ell}{r}
\]
so we can finish as in the proof of 
Theorem \ref{firstlower}
by enlarging the curve to the left of $(x_1,y_1)$ with an arc of a circle.

\end{proof}

\section{Proofs of the main results}\label{se:proofs}

We begin by proving Theorem \ref{main_result}. First we are going to show that, as mentioned in Remark \ref{ellsmall}, we can restrict ourselves to the case $\ell\le r^{2/3}/12$. 

If $\ell\gg r^{2/3}$, Theorem \ref{main_result} says that $N_{w,\ell, r}\asymp \ell r^{-1/3}$. Thus, by cutting a curve of length $\ell>r^{2/3}/12$ into pieces of length between $r^{2/3}/24$ and $r^{2/3}/12$ we see that applying the bound $N_{w,\ell,r}\ll \ell r^{-1/3}$ for $\ell\le r^{2/3}/12$ implies the same bound for $\ell>r^{2/3}/12$.
Regarding the lower bound, we can assume first that $r$ is sufficiently large, since in the case $\ell\le r  \ll 1$ trivially $N_{w,\ell,r}\asymp 1$. We begin by building a curve $\Gamma_1$ with initial slope $w\le 1$, length $(500r)^{2/3}/24$ and radius of curvature larger than $500r$, with $\gg r^{1/3}$ integral points.  If this curve ends at a point $A$ with slope $\tan\theta_1$, we build another curve $\Gamma_2$ with the same conditions but initial slope $\tan(\theta_1+2r^{-1/3})$.
Consider the point $\tilde B$ such that the line passing through $A$ and $\tilde B$ has slope $\tan(\theta_1+r^{-1/3})$ and the distance from $A$ to $\tilde B$ equals $r^{2/3}$.
By an integral translation, we can assume that the initial
point $B$ of $\Gamma_2$ is at distance at most 1 from $\tilde B$. This implies that $|AB|=r^{2/3}+O(1)\sim r^{2/3}$, $\tan(AB,T_A)=-r^{-1/3}+O(\frac{1}{r^{2/3}+O(1)})\sim -r^{-1/3}$ with $T_A$ the line tangent to $\Gamma_1$ at $A$, and $\tan(AB, T_B)\sim r^{-1/3}$ with $T_B$ the line tangent to $\Gamma_2$ at $B$. 
Then, we can apply Proposition \ref{Glemma} with $\rho=250r$ to join $\Gamma_1$ to $\Gamma_2$ so that the full curve is $\mathcal {C}^2$ and of length $O(r^{2/3})$. We continue this procedure with curves $\Gamma_1,\Gamma_2,\Gamma_3,\ldots$, joining $\Gamma_{i}$ to $\Gamma_{i+1}$ until we get a $\mathcal C^2$ curve $\Gamma$ with length at most $\ell$, radius of curvature at least $r$ and $\gg (\ell/r^{2/3})r^{1/3}=\ell r^{-1/3}$ integral points.

After the last paragraph, we can  assume $\ell\le r^{2/3}/12$.  
Let $q_0$ be a
natural number for which the minimum $\delta_{w,\ell r^{-1}}=\min_{q\in\mathbb N} (q\ell r^{-1}+\|qw\|)$ is reached, and let $a_0=[q_0 w]$. This implies $(a_0,q_0)=1$, and applying Proposition \ref{upper_bound_arithmetic} and Theorem \ref{upper_bound_geometric} with $a_0$ and $q_0$ we have
\begin{equation}\label{equation_upper}
 N_{w,\ell,r}\ll 1+\min (\ell/r^{1/3}, \ell \delta_{w,\ell r^{-1}}).
\end{equation}
For $l\ll r^{1/3}$ this implies $N_{w,\ell,r}\ll 1$. In this range it is trivial to build a curve satisfying the curvature condition and with at least one integral point, so that  $N_{w,\ell,r}\asymp 1$. 

Then we can assume $Kr^{1/3}<\ell\le r^{2/3}/12$ for any fixed constant $K$. Then we can apply either Theorem \ref{lower_bound_arithmetic}  or Theorem \ref{lower_bound_very_near} or Theorem \ref{lower_bound_near} in order
to get the bound
\[
 N_{w,\ell,r}\gg 1+\min (\ell/r^{1/3}, \ell(\|qw\|+q\ell/r)).
\]
for some $q\in \mathbb N$, which clearly implies
\[
 N_{w,\ell,r}\gg 1+\min(\ell r^{-1/3}, \ell \delta_{w,\ell r^{-1}})
\]
so from this and (\ref{equation_upper}) we deduce Theorem \ref{main_result}.

Now we are going to derive Theorem \ref{initial_slope} from Theorem \ref{main_result}. For $w$ rational this deduction is trivial. For $w$ irrational, 
let us pick any $r$ sufficiently large, and choose the pair of convergents $q_j,q_{j+1}$ of $w$ such that
\begin{equation}\label{fractions_range}
 q_j \le r^{1/3} < q_{j+1}.
\end{equation}
In this range
\[
\frac{1}{2q_jq_{j+1}}\le  \left|w-\frac{a_j}{q_j}\right|\le \frac{1}{q_jq_{j+1}}\le \frac{1}{q_j r^{1/3}}
\]
and we can show that 
\[
 \min(r^{-1/3},\delta_{w,\ell r^{-1}})\le \min (r^{-1/3}, \|q_j w\|+q_j\ell r^{-1}) \le 10 \min(r^{-1/3},\delta_{w,\ell r^{-1}}) 
\]
with $\ell=r^{\alpha}$. The first inequality comes from the definition of $\delta_{w,\ell r^{-1}}$; if the second were not true we would have $\delta_{w,\ell r^{-1}}<\frac{r^{-1/3}}{10}$ and then  $q<\frac{r^{1/3}}{10}, \|qw\|<\frac{r^{-1/3}}{10}$ for the $q$ such that $\delta_{w,\ell r^{-1}}=\|qw\|+q\ell r^{-1}$, and this would 
contradict
the inequality $(qq_j)^{-1}	\le |w-\frac aq|+|w-\frac{a_j}{q_j}|$, $a=[qw]$.

Then, by Theorem \ref{main_result} we have
\[
 N=N_{w,r^{\alpha},r}\asymp \min\left(r^{\alpha-1/3}, q_j r^{2\alpha-1}+r^{\alpha}/q_j^{\beta_j-1}\right)
\]
since $q_{j+1}\asymp q_j^{\beta_j-1}$. This also gives the inequality 
$
q_j\le r^{1/3}\ll q_j^{\beta_j-1}.  
$
By choosing $r^{1/3}=q_j$ we have $q_jr^{2\alpha-1}=(r^{\alpha-1/3})^2\ge r^{\alpha-1/3}$ so $N\asymp r^{\alpha-1/3}$ and the result for the upper limit follows. On the other hand, any $r$ in the range (\ref{fractions_range}) can be written as $r\asymp q_j^{3\theta}$ with $1\le \theta\le \beta_j-1$ or%
, in other terms,
$q_j\asymp r^{\epsilon}$ with $1/3(\beta_j-1)\le \epsilon\le 1/3$. Then
\[
 N\asymp \min (r^{\alpha-1/3}, r^{\epsilon+2\alpha-1}+r^{\alpha-\epsilon(\beta_j-1)} ).
\]
Now, we want to 
compute
the minimum of this function $N=N(\epsilon)$ in the interval $1/3(\beta_j-1)\le \epsilon\le 1/3$. Since $r^{\alpha-1/3}$ is constant in $\epsilon$, we only need to look at the minimum of the second term. Moreover, both at $\epsilon=1/3(\beta_j-1)$ and $\epsilon=1/3$ we have $N(\epsilon)\asymp r^{\alpha-1/3}$, so we only need to check the case in which the second term has a minimum in the interior of the interval, and this happens at $\epsilon=(1-\alpha)/\beta_j$ whenever
\[
 \frac{1}{3(\beta_j-1)}< \frac{1-\alpha}{\beta_j}<\frac{1}{3}.
\]
The second inequality is always true, but the first amounts to
\[
 \beta_j>1+\frac{1}{2-3\alpha}.
\]
In this case, the minimum for the second term is $\asymp r^{2\alpha-1+\frac{1-\alpha}{\beta_j}}$. 
This implies the result for the lower limit.

\section{Appendix}

\begin{lemma}[Trigonometric lemma]
\label{trigonometry}
Let $\tan\theta$ and $\tan(\theta+\tilde \beta)$ be in the interval $[s, s+\Delta s]\subset [0,1]$, with $0\le \Delta s<1/2$. Then we have
\[
 1-\Delta s\le \frac{\tan(\theta+\tilde\beta)-\tan\theta}{(1+(\tan\theta)^2)\tan\tilde\beta}\le \frac{1-\Delta s}{1-2\Delta s}.
\]
\end{lemma}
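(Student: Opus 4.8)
The plan is to reduce everything to the addition formula for the tangent and then bound the resulting expression using the constraints $\tan\theta, \tan(\theta+\tilde\beta)\in[s,s+\Delta s]\subset[0,1]$. First I would write the numerator using
\[
\tan(\theta+\tilde\beta)-\tan\theta = \frac{\tan\tilde\beta\,(1+\tan^2\theta)}{1-\tan\theta\tan\tilde\beta},
\]
which comes directly from $\tan(\theta+\tilde\beta)=\frac{\tan\theta+\tan\tilde\beta}{1-\tan\theta\tan\tilde\beta}$ after subtracting $\tan\theta$ and simplifying. Dividing by $(1+\tan^2\theta)\tan\tilde\beta$ immediately collapses the quantity we must estimate to the clean form
\[
\frac{\tan(\theta+\tilde\beta)-\tan\theta}{(1+\tan^2\theta)\tan\tilde\beta} = \frac{1}{1-\tan\theta\tan\tilde\beta}.
\]
So the entire lemma is equivalent to the two-sided estimate $1-\Delta s\le \frac{1}{1-\tan\theta\tan\tilde\beta}\le \frac{1-\Delta s}{1-2\Delta s}$, and everything now hinges on bounding the single product $\tan\theta\tan\tilde\beta$.

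Next I would control $\tan\theta\tan\tilde\beta$ from the hypotheses. We have $\tan\theta\in[s,s+\Delta s]\subseteq[0,1]$, so $0\le\tan\theta\le 1$. For $\tan\tilde\beta$, I would use that $\tan\tilde\beta = \tan\big((\theta+\tilde\beta)-\theta\big)=\frac{\tan(\theta+\tilde\beta)-\tan\theta}{1+\tan(\theta+\tilde\beta)\tan\theta}$; since both tangents lie in $[s,s+\Delta s]$ their difference lies in $[-\Delta s,\Delta s]$, while the denominator $1+\tan(\theta+\tilde\beta)\tan\theta\ge 1$ because both factors are nonnegative. Hence $|\tan\tilde\beta|\le\Delta s$. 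Combining with $0\le\tan\theta\le 1$ gives the key bound $|\tan\theta\tan\tilde\beta|\le\Delta s$, i.e. $-\Delta s\le \tan\theta\tan\tilde\beta\le\Delta s$. Because $\Delta s<1/2$, the quantity $1-\tan\theta\tan\tilde\beta$ is positive and lies in $[1-\Delta s,\,1+\Delta s]$.

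Finally I would convert this interval for $1-\tan\theta\tan\tilde\beta$ into the claimed bounds on its reciprocal. Since $x\mapsto 1/x$ is decreasing on positive reals, $1-\tan\theta\tan\tilde\beta\le 1+\Delta s$ gives $\frac{1}{1-\tan\theta\tan\tilde\beta}\ge \frac{1}{1+\Delta s}$, and $1-\tan\theta\tan\tilde\beta\le 1+\Delta s$ must be sharpened: the lower bound I actually need is $1-\Delta s$ on the left, so I would check $\frac{1}{1-\tan\theta\tan\tilde\beta}\ge 1-\Delta s$, which follows from $1-\tan\theta\tan\tilde\beta\le 1+\Delta s\le \frac{1}{1-\Delta s}$ (valid as $(1+\Delta s)(1-\Delta s)=1-\Delta s^2\le 1$). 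For the upper bound, $1-\tan\theta\tan\tilde\beta\ge 1-\Delta s>0$ yields $\frac{1}{1-\tan\theta\tan\tilde\beta}\le\frac{1}{1-\Delta s}$; to reach the sharper $\frac{1-\Delta s}{1-2\Delta s}$ I would instead keep track of the sign and note that the worst case is $\tan\theta=1$ together with $\tan\tilde\beta$ as large as the constraint $\tan(\theta+\tilde\beta)\le s+\Delta s\le 1$ permits, giving the refined denominator $1-2\Delta s$. The only delicate point, and the part I expect to require the most care, is this last sharpening: obtaining the precise constant $\frac{1-\Delta s}{1-2\Delta s}$ rather than the crude $\frac{1}{1-\Delta s}$ forces one to use both endpoints of the interval $[s,s+\Delta s]$ simultaneously (not just $|\tan\tilde\beta|\le\Delta s$ in isolation), so I would track the extremal configuration explicitly and verify that the product $\tan\theta\tan\tilde\beta$ cannot exceed $\Delta s$ while the reciprocal attains $\frac{1-\Delta s}{1-2\Delta s}$.
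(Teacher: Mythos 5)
Your reduction via the addition formula to $\frac{1}{1-\tan\theta\tan\tilde\beta}$ is exactly the paper's starting point, and your argument is essentially correct; the difference lies in how $\tan\tilde\beta$ is controlled. You apply the subtraction formula $\tan\tilde\beta=\frac{\tan(\theta+\tilde\beta)-\tan\theta}{1+\tan(\theta+\tilde\beta)\tan\theta}$, whose denominator is at least $1$ because both tangents are nonnegative, to get $|\tan\tilde\beta|\le\Delta s$; the paper instead bounds $|\tan\tilde\beta|\le|\tan(\theta+\tilde\beta)-\tan\theta|\,(1+|\tan\tilde\beta|)\le\Delta s\,(1+|\tan\tilde\beta|)$ and solves this self-referential inequality to obtain the slightly weaker $|\tan\tilde\beta|\le\Delta s/(1-\Delta s)$, which is exactly what produces the stated constants $1-\Delta s$ and $\frac{1-\Delta s}{1-2\Delta s}$. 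Your sharper bound yields $1-\tan\theta\tan\tilde\beta\in[1-\Delta s,\,1+\Delta s]$, hence the reciprocal lies in $\left[\frac{1}{1+\Delta s},\frac{1}{1-\Delta s}\right]$, which sits inside the claimed interval; so your route in fact proves a slightly stronger statement. The one place you go astray is the final paragraph: you believe $\frac{1-\Delta s}{1-2\Delta s}$ is sharper than $\frac{1}{1-\Delta s}$ and launch into an unnecessary (and only sketched) extremal-configuration analysis, but the comparison goes the other way, since $\frac{1}{1-\Delta s}\le\frac{1-\Delta s}{1-2\Delta s}$ is equivalent to $0\le(\Delta s)^2$. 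Replace that last discussion with this one-line comparison and your proof is complete.
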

\begin{proof}
The formula for the tangent of the sum of two angles gives
\[
 \tan(\theta+\tilde\beta)-\tan\theta= \tan\tilde\beta \frac{1+(\tan\theta)^2}{1-\tan\theta\tan\tilde\beta},
\]
which implies
\[
 |\tan\tilde\beta|\le |\tan(\theta+\tilde\beta)-\tan\theta| (1+|\tan\tilde\beta|)\le \Delta s (1+|\tan\tilde\beta)
\]
so that $|\tan\tilde\beta|\le \Delta s/(1-\Delta s)$. Substituting this bound into the previous identity 
ends
the proof.
\end{proof}

The following result is essentially a variation on 
construction in \cite{G}.

\begin{proposition}\label{Glemma}
Let $A$ and $B$ be two points in the euclidean plane $E = \mathbb{R}^2$, $T_A$ (\emph{resp}. $T_B$) be a straight line containing $A$ (\emph{resp}. $B$) and $\a, \b, \r_1, \r_2, \r$ be real numbers such that
\begin{eqnarray}
\notag
&(\text{\textrm{i}})& \quad \tan(AB, T_A) = \a \text{ and } \tan(AB, T_B) = \b,\\
\notag
&(\text{\textrm{ii}})& \quad \b \in (0, 1/3], \a \in [-3 \b, -\b/3] \textrm{ and } 0<\r\le \min(\r_1,\r_2),\\
\notag
&(\text{\textrm{iii}})& \quad |AB| \in [(1/3) \b \r\; , \; 9 \b \r].
\end{eqnarray}
There exists a two times differentiable curve with end points $A$ and $B$, which admits for tangent at the point $A$ (\emph{resp}. $B$) the line $T_A$ (\emph{resp}. $T_B$), 
such that its radius of curvature is always between $\r/250$ and $250\max(\r_1,\r_2)$ and 
which
takes the value $(1+\a^2)^{3/2}\rho_1$ (\emph{resp}. $(1+\b^2)^{3/2}\r_2)$ at the point $A$(\emph{resp}. $B$).
\end{proposition}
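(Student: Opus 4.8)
The plan is to realise the arc as a graph over the chord $AB$ and to build it by prescribing its second derivative. First I would apply the isometry taking $A$ to $(0,0)$ and $B$ to $(L,0)$ with $L=|AB|$; since $|\alpha|\le 3\beta\le 1$ and $|\beta|\le 1/3$, the arc we seek is a graph $y=f(x)$ on $[0,L]$, and the data become $f(0)=f(L)=0$, $f'(0)=\alpha$, $f'(L)=\beta$. Because the radius of curvature of a graph equals $(1+f'^2)^{3/2}/f''$, imposing $f''(0)=1/\rho_1$ and $f''(L)=1/\rho_2$ forces the radius to be $(1+\alpha^2)^{3/2}\rho_1$ at $A$ and $(1+\beta^2)^{3/2}\rho_2$ at $B$, which is the final requirement; it also makes the band requirement equivalent to keeping $f''$ inside $[\,2^{3/2}/(250\max(\rho_1,\rho_2)),\,250/\rho\,]$. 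Thus everything reduces to producing a positive $f''$ on $[0,L]$ with these endpoint values, with $\int_0^L f''=\beta-\alpha$ and $\int_0^L s\,f''(s)\,ds=\beta L$ (the latter two encoding $f'(L)=\beta$ and $f(L)=0$), and staying in that band.

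Next I would non-dimensionalise by $x=Lt$ and $f(x)=L\beta\,\psi(t)$, $t\in[0,1]$. Then $\psi''(0)=L/(\beta\rho_1)$ and $\psi''(1)=L/(\beta\rho_2)$ lie in $(0,9]$ by hypotheses (ii)--(iii), while the two integral constraints collapse to the clean pair $\int_0^1\psi''=1-\alpha/\beta\in[4/3,4]$ and $\int_0^1 s\,\psi''(s)\,ds=1$. In other words $\psi''\ge0$ must have total mass $m\in[4/3,4]$ and centre of mass $1/m\in[1/4,3/4]$, with prescribed boundary heights in $(0,9]$; equivalently, in the original scale, $f''$ has average $(\beta-\alpha)/L\in[4/(27\rho),\,12/\rho]$, which sits comfortably inside the band.

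For the construction itself I would take $f''$ to be continuous and piecewise linear, equal to $1/\rho_1$ and $1/\rho_2$ at the ends and equal to a constant plateau level $v>0$ on an interior segment, with affine transitions. Continuity of $f''$ keeps the curve $C^2$ across the breakpoints, which is the reason for insisting on a continuous $f''$ rather than gluing circular or parabolic arcs. This profile has two free parameters, the level $v$ and the position/width of the plateau, which I match to the prescribed mass $\beta-\alpha$ and moment $\beta L$; solvability with $v>0$ follows by an intermediate-value argument, since shrinking the plateau towards either endpoint or varying $v$ sweeps the attainable (mass, moment) pair across a region containing the target. Integrating $f''$ twice, with the constants fixed by $f'(0)=\alpha$ and $f(0)=0$ and with $f(L)=0$ guaranteed by the moment condition, yields $f$; undoing the initial isometry places the arc at $A,B$ with tangents $T_A,T_B$, since a rotation preserves both the radius of curvature and the angles that the tangents make with the chord.

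The crux, and the step I expect to cost the most, is the quantitative check that $f''$ stays in $[\,2^{3/2}/(250\max(\rho_1,\rho_2)),\,250/\rho\,]$ in every regime. The endpoint values $1/\rho_i$ lie in the band because $1/\rho_i\le 1/\rho\le 250/\rho$ and $1/\rho_i\ge 1/\max(\rho_1,\rho_2)\ge 2^{3/2}/(250\max(\rho_1,\rho_2))$. The plateau level $v$ is of the order of the average $(\beta-\alpha)/L\in[4/(27\rho),\,12/\rho]$, strictly inside the band, and the transitions only interpolate between band values. The delicate case is when an endpoint curvature $1/\rho_i$ far exceeds the average, that is $\rho_i\approx\rho$ while $L$ is near its maximum $9\beta\rho$: then $f''$ must descend into a pronounced valley, and one must verify it never drops below the floor $2^{3/2}/(250\max(\rho_1,\rho_2))$, which holds because the average alone is bounded below by $4/(27\rho)$, an order of magnitude above that floor. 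The generous constant $250$ together with the fact that all normalised data $\psi''(0),\psi''(1)\in(0,9]$, $m\in[4/3,4]$ and $1/m\in[1/4,3/4]$ range over fixed compact sets reduce the whole verification to a finite, elementary estimate.
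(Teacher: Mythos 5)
Your reduction is the same as the paper's: place the chord on the $x$-axis and seek the arc as a graph $y=F(x)$ with $F(a)=F(b)=0$, $F'(a)=\alpha$, $F'(b)=\beta$, $F''(a)=1/\rho_1$, $F''(b)=1/\rho_2$, and $F''$ confined to a band (the paper uses $[0.01/\max(\rho_1,\rho_2),\,100/\rho]$, which yields radii in $[\rho/250,\,250\max(\rho_1,\rho_2)]$ after the factor $(1+F'^2)^{3/2}\in[1,2^{3/2}]$; your band is the same up to constants, and your endpoint and average checks are correct). Where you diverge is in how the interpolant is produced. The paper works one derivative up: Lemma \ref{GGaux} constructs $f=F'$ as a (smoothed) two-segment piecewise-linear function joining $(a,\alpha)$ to $(b,\beta)$, so only the single scalar constraint $\int_a^b f=0$ (encoding $F(b)=0$) remains, and that is met by building one admissible profile with negative integral and one with positive integral and taking the convex combination $(J_2f_1-J_1f_2)/(J_2-J_1)$ --- all endpoint and band conditions being convex, they survive automatically. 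You instead work at the level of $F''$ and must hit two scalar targets at once (mass $\beta-\alpha$ and moment $\beta L$) with your plateau family; the assertion that varying the level and position ``sweeps the attainable (mass, moment) pair across a region containing the target'' is a two-dimensional covering claim that plain one-dimensional IVT does not deliver. It can be repaired --- first solve for the plateau level $v$ to fix the mass (monotone in $v$, hence unique), then slide the plateau to move the centre of mass continuously across $[L/4,3L/4]$, checking that $v$ stays in the band and that the narrow transitions contribute negligibly --- but that is precisely the work your sketch defers, and it is the one point where your route is genuinely more delicate than the paper's one-parameter convex-combination trick.
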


We first prove a technical Lemma, in the spirit of Lemma 1 in \cite{G}.

\begin{lemma}\label{GGaux}
Let $\r>0, a < b, \b >0, \a = -\l\b$ with $\l \in [1/3, 3]$ be real numbers such that
\begin{equation}\label{ddelta}
(1/3) \b \r \le b - a \le 9 \b \r.
\end{equation}
There exists a differentiable real function $f$ defined on $[a, b]$ such that
\begin{eqnarray}
\notag
&(\text{\textrm{i}})& \quad f(a) = \a  \; \text{ and }\; f(b) = \b,\\
\notag
&(\text{\textrm{ii}})& \quad f'(a) = 1/\r_1, \, f'(b)=1/\r_2,\\
\notag
&(\text{\textrm{iii}})& \quad \forall x \in [a, b] \; : \; 0.01/\max(\r_1,\r_2) \le f'(x) \le 100/\r, \\
\notag
&(\text{\textrm{iv}})& \quad \int_a^b f(t) dt = 0.
\end{eqnarray}
\end{lemma}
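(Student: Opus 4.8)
The plan is to treat $g=f'$ as the main unknown and recover $f$ by $f(x)=\a+\int_a^x g(t)\,dt$; then $f(a)=\a$ holds automatically, the derivative conditions (ii) become the pointwise prescriptions $g(a)=1/\r_1$ and $g(b)=1/\r_2$ (here $\r_1,\r_2\ge\r$, so both are at most $1/\r$), and the two remaining conditions turn into moments of $g$. Writing $L=b-a$, the condition $f(b)=\b$ is $\int_a^b g=\b-\a=(1+\l)\b$, and, switching the order of integration,
\[
 \int_a^b f(x)\,dx=\a L+\int_a^b (b-t)\,g(t)\,dt,
\]
so that (iv) becomes $\int_a^b (b-t)\,g(t)\,dt=-\a L$. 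Dividing these two identities, the centroid of $g$ sits at the normalized position $\b/(\b-\a)=1/(1+\l)$, which by $\l\in[1/3,3]$ lies in $[1/4,3/4]$.

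The point is that this centroid is bounded away from both endpoints, so the problem reduces to exhibiting a positive profile $g$ on $[a,b]$ with the prescribed endpoint values, total mass $(1+\l)\b$, and centroid $1/(1+\l)$, all while staying in the window $[0.01/\max(\r_1,\r_2),\,100/\r]$. First I would note that the target average slope $(1+\l)\b/L$ is comfortably inside this window: since $L\in[\b\r/3,9\b\r]$ and $1+\l\in[4/3,4]$, it lies between about $0.15/\r$ and $12/\r$, a factor of order $8$ below the cap $100/\r$ and well above the (even smaller) lower threshold. To meet the four linear data --- two endpoint values and two moments --- I would take $g$ continuous and piecewise linear with two interior nodes (say at the points of trisection of $[a,b]$) and solve the resulting $2\times2$ system for the two interior values; continuity of $g$ makes $f$ a $C^1$ piecewise-quadratic function, which is all that (iii) and ``differentiable'' require. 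Because the prescribed centroid is central and the prescribed average moderate, the interior values come out positive and moderate, and the deliberately loose constants $0.01$ and $100$ absorb the slack.

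The main obstacle is the lower bound of (iii): positivity of $g$, equivalently strict monotonicity of $f$, must persist in the extreme regimes $\l\approx 3$ (centroid near $1/4$) or $\l\approx 1/3$ (centroid near $3/4$), where the mass is forced onto one side, combined with $L$ near $9\b\r$, which makes the required average small, and with tiny endpoint data $1/\r_1,1/\r_2$ when $\r_1$ or $\r_2$ greatly exceeds $\r$. This is precisely where a naive ``interpolate first, then add a mean-zero correction to fix (iv)'' strategy breaks down, since the correction needed can exceed the small interior slope and drive $f'$ negative; placing the mass according to the centroid from the outset avoids this. Confirming that the explicit piecewise-linear $g$ remains inside $[0.01/\max(\r_1,\r_2),\,100/\r]$ throughout all these regimes is the single genuinely careful, though wholly elementary, verification of the proof.
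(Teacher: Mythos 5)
Your reduction is set up correctly: passing to $g=f'$, conditions (i) and (iv) do become the two moment conditions $\int_a^b g=(1+\lambda)\beta$ and $\int_a^b(t-a)\,g(t)\,dt=\beta L$ (centroid at $a+L/(1+\lambda)$), with $L=b-a$. But the construction you propose to solve this moment problem fails, and it fails exactly in the regime you flag as delicate and then dismiss. Take $\lambda=3$, $\rho_1=\rho_2=\rho$ and $L=9\beta\rho$. For a continuous piecewise linear $g$ with nodes at the trisection points and values $g_0=1/\rho,\,g_1,\,g_2,\,g_3=1/\rho$ one has
\[
\int_a^b g=\frac{L}{6}\bigl(g_0+2g_1+2g_2+g_3\bigr),\qquad \int_a^b (t-a)\,g(t)\,dt=\frac{L^2}{54}\bigl(g_0+6g_1+12g_2+8g_3\bigr),
\]
and the two constraints become $g_1+g_2=1/(3\rho)$ and $g_1+2g_2=-1/(2\rho)$, whence $g_2=-5/(6\rho)<0$. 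In fact the second equation alone forces $\min(g_1,g_2)<0$, so \emph{no} choice of the two interior values keeps $g$ nonnegative: the pinned endpoint values $1/\rho$ exceed the required average $4/(9\rho)$ and, with symmetric fixed nodes, put too much weight near $b$ for the centroid to reach $a+L/4$. The same thing happens at the other extreme $L=\beta\rho/3$ (there $g_2=-9.5/\rho$). So the ``single genuinely careful verification'' you defer is precisely where the argument breaks; repairing it would require a different ansatz (e.g.\ nodes pushed very close to the endpoints), not just checking constants.

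The paper avoids the moment computation altogether. It builds two explicit profiles satisfying (i)--(iii) --- essentially $f$ piecewise linear with slope $0.01/\rho$ then $100/\rho$, and the reverse --- shows the first has $J_1=\int_a^b f<0$ and the second $J_2=\int_a^b f>0$, and takes the combination $(J_2f_1-J_1f_2)/(J_2-J_1)$, a convex combination since $J_1<0<J_2$, which preserves the affine conditions (i)--(ii) and the interval condition (iii) and has integral exactly $0$. If you want to keep your parametrization by $g=f'$, import that idea: (i)--(iii) cut out a convex set of admissible $g$ on which the first moment is a linear functional, so it suffices to exhibit one admissible $g$ whose moment is too small and one whose moment is too large --- much easier than hitting the moment exactly with a rigid two-parameter family.
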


\begin{proof}
We consider the points $M_A = (a, \a)$ and $M_B = (b, \b)$. The slope of 
the
segment $M_A M_B$ lies in the interval $[0.3/\r, 12/\r]$: we have indeed
$$
\frac{0.3}{\r}\le\frac{4}{9\r} \le \frac{(4/3)\b}{3\b\r} \le \frac{\b(1+\l)}{b-a} = \frac{\b - \a}{b-a} = \frac{\b(1+\l)}{b-a} \le \frac{4\b}{\b \r /3} = \frac{12}{\r}.
$$
This implies that the straight lines $D_A$ passing through $M_A$ and having the slope $0.01/\r < 0.3/\r$ and the lines $D_B$
passing through $M_B$ and having the slope $100/\r > 12/\r$ will meet at a point $M_1 = (x_1, y_1)$ with $a < x_1 < b$ and $\a < y_1 < \b$. \\
We now consider the function $h_1$ defined on $[a, b]$, linear on $[a, x_1]$ and on $[x_1, b]$ and which takes the values: $h_1(a) = \a, h_1(x_1) = y_1$ and $h_1(b)=\b$. Let us show that
$$
I_1 = \int_a^b h_1 (x) dx  < 0.
$$
We have 
$$
2 I_1 = (x_1 - a) (\a+y_1)+  (b - x_1)(\b + y_1);
$$
The coordinates of the point $M_1$ are defined by
\begin{equation}\label{M1}
\frac{y_1-\a}{x_1-a}= 0.01/ \r \quad \text{ and }\quad \frac{\b-y_1}{b-x_1}=100 /\r,
\end{equation}
from which we get
\begin{eqnarray}\label{I11}
2 I_1 &=& \r \left\{100 (y_1^2-\a^2)+0.01(\b^2-y_1^2)\right\} \\
&=& \r \b^2 \left\{99.99 \left(\frac{y_1}{\b}\right)^2 + (0.01-100\l^2)\right\}.
\end{eqnarray}

From (\ref{M1}) we can compute $y_1$ and get
$$
\frac{1}{\r} = 100 (y_1 - \a) + 0.01 (\b y_1)
$$
which leads to
\begin{eqnarray}
\notag
\frac{b-a}{\r \b} &=& 100 \left(\frac{y_1}{\b} - \frac{\a}{\b}\right) + 0.01 \left(1 - \frac{y_1}{\b}\right)\\
\notag
&=& 99.99 \left(\frac{y_1}{\b}\right) + (100 \l + 0.01).
\end{eqnarray}
Since $(b-a)/(\r \b) \in [1/3, 9]$ and $\l \in [1/3, 3]$, we have
$$
-99.893 \l \le -100 \l + 1/3 -0.01 \le 99.99 (y_1/\b) \le 9 - (100 \l +0.01) <0,
$$ 
and so 
$$
\left(\frac{y_1}{\b}\right)^2 \le 0.9981 \l^2.
$$
We 
incorporate
this last relation in (\ref{I11}) and get
$$
2I_1 /( \r \b^2) \le 99.99\times 0.9981 \l^2 + 0.01 - 100 \l^2 < 0.01 - 0.19 \l^2 < -0.011,
$$
which proves that $I_1$ is negative.\\
For $\d$ positive and sufficiently small, we also have
$ \int_a^b (h_1 (x)  + 2\d )dx  < 0$. We can then slightly modify the function $h_1 + \d$ to get a function $f_1(x)$ which satisfies the conditions $(i), (ii)$ and $(iii)$ of Lemma \ref{GGaux} and such that $J_1 = \int_a^b f_1(x) dx < 0$.\\

In a similar way, considering first the straight lines $\Delta_A$ (\emph{resp.} $\Delta_B$) passing through $M_A$ (\emph{resp.} $M_B$) and having the slope $100/\r$ (\emph{resp.} $0.01/\r$), one can construct a function $f_2(x)$ which satisfies the conditions $(i), (ii)$ and $(iii)$ of Lemma \ref{GGaux} and such that $J_2 = \int_a^b f_2(x) dx >0$.\\

The function $f$ defined by $f(x) = (J_2 f_1(x) -J_1 f_2(x))/(J_2-J_1)$ satisfies the conditions $(i), (ii), (iii)$ and $(iv)$ of the Lemma \ref{GGaux}.

\end{proof}

For a two times differentiable function $g$, we denote by $\rad_g(x)$ its radius of curvature at the point $x$, given by
\begin{equation}\label{radius}
\rad_g(x) = \frac{\left(1+g'(x)^2\right)^{3/2}}{|g''(x)|}.
\end{equation}

\begin{corollary}\label{Gcurv}
Let $\min(\r_1,\r_2)\ge \r>0, a < b, \b \in (0, 1/3], \a = -\l\b$ with $\l \in [1/3, 3]$ be real numbers such that
\begin{equation}\label{ddelta}
1/3 \b \r \le b - a \le 9 \b \r.
\end{equation}
There exists a two times differentiable real function $F$ defined on $[a, b]$ such that
\begin{eqnarray}
\notag
&(\text{\textrm{i}})& \quad F(a)=F(b)=0,\\
\label{Fprimenab}
\notag
&(\text{\textrm{ii}})& \quad F'(a)=\a, F'(b) =\b,\\
\label{Frdab}
\notag
&(\text{\textrm{iii}})& \quad \text{We have } \rad_F(a) = \r_1 (1+\a^2)^{3/2} \text{ and } \rad_F(b) = \r_2 (1+\b^2)^{3/2}, \\
\label{Frad}
\notag
&(\text{\textrm{iv}})& \quad \forall x \in [a, b] \; \colon \;  \rad_F(x) \in [0.01\r, 300 \max(\rho_1,\rho_2)].
\end{eqnarray}
\end{corollary}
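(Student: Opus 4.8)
The plan is to take the function $f$ produced by Lemma \ref{GGaux} and simply integrate it. Define
\[
F(x) = \int_a^x f(t)\,dt, \qquad x \in [a,b].
\]
Since $f$ is differentiable, hence continuous, on $[a,b]$, the fundamental theorem of calculus gives $F'(x) = f(x)$ and then $F''(x) = f'(x)$; in particular $F$ is twice differentiable, as required. I would then read off properties (i)--(iii) directly from the corresponding properties of $f$. Property (i) is immediate: $F(a)=0$ and $F(b)=\int_a^b f(t)\,dt = 0$ by condition (iv) of Lemma \ref{GGaux}. For (ii), $F'(a)=f(a)=\alpha$ and $F'(b)=f(b)=\beta$ by condition (i) of the lemma.

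Condition (iii) of the lemma guarantees $f'(x)>0$ throughout, so $f$ is strictly increasing and in the radius-of-curvature formula \eqref{radius} we have $|F''(x)| = f'(x)$. Evaluating at the endpoints and using $f'(a)=1/\rho_1$, $f'(b)=1/\rho_2$ yields
\[
\rad_F(a) = \frac{(1+\alpha^2)^{3/2}}{f'(a)} = \rho_1(1+\alpha^2)^{3/2}, \qquad \rad_F(b) = \rho_2(1+\beta^2)^{3/2},
\]
which is property (iii).

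The only point needing a short argument is the uniform two-sided bound (iv). Since $f$ is increasing, for every $x\in[a,b]$ we have $f(x)\in[f(a),f(b)]=[\alpha,\beta]$; because $\alpha=-\lambda\beta$ with $\lambda\le 3$ and $\beta\le 1/3$, this forces $|f(x)|\le 1$ and hence $1 \le (1+f(x)^2)^{3/2} \le 2^{3/2} < 2.83$. Combining this with the derivative bounds $0.01/\max(\rho_1,\rho_2)\le f'(x)\le 100/\rho$ from condition (iii) of the lemma gives
\[
\rad_F(x) = \frac{(1+f(x)^2)^{3/2}}{f'(x)} \le \frac{2^{3/2}}{0.01/\max(\rho_1,\rho_2)} < 300\max(\rho_1,\rho_2)
\]
and
\[
\rad_F(x) \ge \frac{1}{100/\rho} = 0.01\rho,
\]
establishing (iv). I expect no real obstacle here: the entire analytic content is already packaged into Lemma \ref{GGaux}, and the corollary follows from a single integration together with the elementary bound $|f|\le 1$ that controls the numerator of the curvature.
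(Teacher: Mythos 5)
Your proof is correct and follows essentially the same route as the paper: define $F(x)=\int_a^x f(t)\,dt$ with $f$ from Lemma \ref{GGaux}, read off (i)--(iii) directly, and obtain (iv) from the monotonicity of $F'$ (giving $|F'|\le\max(|\a|,\b)\le 1$) combined with the two-sided bounds on $f'$. No gaps.
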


\begin{proof}
For the given parameters $\r,\rho_1,\rho_2, a, b, \b, \l$, we construct a function $f$ satisfying the conditions of Lemma \ref{GGaux}. We define
\begin{equation}\notag
\forall x \in [a, b] \; : \;  F(x) = \int_a^x f(t) dt.
\end{equation}
Relations $(i), (ii)$ and $(iii)$ come directly from Lemma \ref{GGaux} and (\ref{radius}). Relation $(ii)$ of Lemma \ref{GGaux} implies that for all $x$ one has $0.01\r \le 1/F''(x) \le 100\max(\r_1,\r_2)$; so $F'$ is increasing and then  $|F'(x)| \le \max(|\a|, \b) \le 1$; relation $(iv)$ easily follows from those relations and (\ref{radius}).
\end{proof}

\emph{Proof of Proposition \ref{Glemma}.}
Let $a$ be a real number and let $b= a + |AB|$. We consider a direct orthonormal basis of $E$, in which the coordinates of $A$ (\emph{resp}. $B$) are $(a, 0)$ (\emph{resp}. $(b, 0)$). The parameters $\r,\rho_1,\rho_2, a, b, \a, \b$ satisfy the conditions of Corollary \ref{Gcurv}; we can thus consider a function $F$ satisfying Corollary \ref{Gcurv}. The graph of $F$ satisfies Proposition \ref{Glemma}.
\begin{flushright}
$\Box$
\end{flushright}

\section*{Acknowledgements}

This work has been initiated during a visit of J-M D. at ICMAT (Madrid); both authors have been partially supported by the grant MTM2014-56350-P. J-M D. also acknowledges the support on the CEFIPRA project 5401 and the ANR-FWF project MuDeRa.\\

\end{document}